\let\oldmarginpar\marginpar
\renewcommand\marginpar[1]{\-\oldmarginpar[\raggedleft\footnotesize #1]%
{\raggedright\footnotesize #1}}
\newcommand{\R}{{\varmathbb{R}}}
\newcommand{\N}{{\varmathbb{N}}}
\newcommand{\Z}{\varmathbb{Z}}
\newcommand{\Rn}{\varmathbb{R}^n}
\newcommand{\W}{\mathcal{W}}
\newcommand{\C}{\mathcal{C}}
\def\diam{\qopname\relax o{diam}}
\def\dist{\qopname\relax o{dist}}
\def\min{\qopname\relax o{min}}
\def\diam{\qopname\relax o{diam}}
\def\inte{\qopname\relax o{int}}
\theoremstyle{plain}
\newtheorem{thm}[equation]{Theorem}
\newtheorem{lem}[equation]{Lemma}
\newtheorem{prop}[equation]{Proposition}
\newtheorem{cor}[equation]{Corollary}
\theoremstyle{definition}
\newtheorem{defn}[equation]{Definition}
\theoremstyle{remark}
\newtheorem{rem}[equation]{Remark}
\numberwithin{equation}{section}
\title{Poincar\'e inequalities in quasihyperbolic boundary condition
  domains} 
\author{Ritva Hurri-Syrj\"anen}\address[R. H.-S.]{University of
  Helsinki, Department of Mathematics and Statistics, P.O. Box 68
  (Gustaf H\"allstr\"omin katu 2b), FI-00014 University of Helsinki,
  Finland} \email{ritva.hurri-syrjanen@helsinki.fi}
\author{Niko Marola}\address[N. M.]{University of Helsinki, Department
  of Mathematics and Statistics, P.O. Box 68 (Gustaf H\"allstr\"omin
  katu 2b), FI-00014 University of Helsinki, Finland}
\email{niko.marola@helsinki.fi}
\author{Antti V. V\"ah\"akangas} \address[A. V. V.]{University of
  Helsinki, Department of Mathematics and Statistics, P.O. Box 68
  (Gustaf H\"allstr\"omin katu 2b), FI-00014 University of Helsinki,
  Finland} \email{antti.vahakangas@helsinki.fi} \thanks{A. V. V.  was
  supported by the Academy of Finland, grants 75166001 and 1134757,
  and by the Finnish Academy of Science and Letters, Vilho, Yrj\"o and
  Kalle V\"ais\"al\"a Foundation}
\begin{document}

\keywords{Irregular domain, John domain, Minkowski dimension,
  Poincar\'e inequality, Quasihyperbolic boundary condition,
  Quasihyperbolic distance, Sobolev space, Whitney decomposition}
\subjclass[2010]{46E35, 26D10, 35A23.}

\begin{abstract}
  We study the validity of $(q,p)$-Poincar\'e inequalities, $q<p$, on
  domains in $\R^n$ which satisfy a quasihyperbolic boundary
  condition, i.e. domains whose quasihyperbolic metric satisfies a
  logarithmic growth condition. Maz'ya has given an implicit
  characterization for domains supporting $(q,p)$-Poincar\'e
  inequalities, $q<p$; in the present paper, we show that the
  quasihyperbolic boundary condition domains are such domains whenever
  $p>p_0$, where $p_0$ is an explicit constant depending on $q$, on
  the logarithmic growth condition, and on the boundary of the domain.
\end{abstract}

\maketitle

\markboth{\textsc{R. Hurri-Syrj\"anen, N. Marola, and A. V.~V\"ah\"akangas}}
{\textsc{Poincar\'e inequalities and quasihyperbolic boundary condition}}

\section{Introduction}

A bounded domain $G$ in $\Rn$, $n\geq 2$, is said to support a
$(q,p)$-Poincar\'e inequality if there exists a finite constant $c$
such that the inequality
\begin{equation}\label{poincare}
\left (\int_G |u(x)-u_G|^q\,dx\right)^{1/q} 
	\le c  \left ( \int_G |\nabla u(x)|^p\,dx
\right)^{1/p}
\end{equation}
holds for all functions $u$ in the Sobolev space $W^{1,p}(G)$; here
$1\le p, q <\infty$ and $u_G$ is the integral average of $u$ over
$G$. If $G$ is a John domain (see Definition~\ref{sjohn}), then it is
well known that \eqref{poincare} is valid for all $(q,p)$ where $1\le
p\le q\le np/(n-p)$ \cite[Theorem 5.1]{B}. Property \eqref{eq:John} of
John domains implies that a Poincar\'e inequality supported by balls
is valid also in John domains. In this paper we consider a larger
class of domains which do not inherit the inequalities which balls
support; we study bounded domains satisfying the quasihyperbolic
boundary condition, see Definition~\ref{qhbc}.

A proper subclass of quasihyperbolic boundary condition domains is
formed by John domains, but domains in the former class allow narrow
gaps which can destroy the John condition \eqref{eq:John},
\cite[Example 2.26]{GM}.  This kind of effect implies that a
$(p,p)$-Poincar\'e inequality fails to hold for small values of $p$,
whereas the domain does support the $(p,p)$-Poincare inequality for
large enough $p$.

 A $(q,p)$-Poincar\'e inequality is valid in a
  $\beta$-quasihyperbolic boundary condition domain, if $n-n\beta
  <q=p<\infty$, see \cite[Theorem 1.4]{KOT}, and also \cite[Remark
  7.11]{H}; and if $n-nb <p\le q < b np/(n-p) $, whenever $p<n$ and
  $b=2\beta/(1+\beta)$ \cite[Theorem 1]{JK}; see also \cite[Theorem
  1.5]{KOT}, \cite[Theorem 1.4]{H-S}. It is shown in \cite{JK} that if
  $1\le p<n-nb$, then there exist $\beta$-quasihyperbolic boundary
  condition domains which do not support the $(p,p)$-Poincar\'e
  inequality. We remark that $\beta$-quasihyperbolic boundary
condition domains support $(1,p)$-Poincar\'e inequality for all $p
>n-nb$ by H\"older's inequality while John domains support
$(1,p)$-Poincar\'e inequality for all $1\leq p<\infty$. The question
one may ask is, what can be said about the validity of
$(q,p)$-Poincar\'e inequalities in the case $1\le q < \min \{n-n b,
p\}$.

Poincar\'e inequalities, \eqref{poincare}, in the case $1\leq q <p$
have been considered on general domains, e.g., in \cite[Section
6.4]{M}, see also \cite{HH-S} and the $(1,p)$-case in
\cite{HH-SV}. Maz'ya~\cite{M}, Theorem 6.4.3/2 on p. 344, gives a
characterization for domains which support \eqref{poincare} when
$q<p$. In addition, this class of domains characterizes certain
compact embeddings, see Theorem 6.8.2/2 on p. 376 \cite{M} for more
details. Maz'ya presents also applications to the Neumann problems for
strongly elliptic operators in domains which characterize
\eqref{poincare} with $p=2$ and $1<q\le 2$, cf. Section 6.10.1. We
shall discuss applications in Section~\ref{sect:appls}.

In the present paper, we answer the question about $(q,p)$-Poincar\'e
inequality for quasihyperbolic boundary condition domains in the case
$1\le q<\min\{n-nb,p\}$, $b=2\beta/(1+\beta)$. We use the upper
Minkowski dimension of the boundary. Roughly speaking, an issue is the
counting of the number of those Whitney cubes, of a given size, whose
shadows are comparable in measure. The shadow of a fixed Whitney cube
is the union of those cubes to which one goes through the fixed cube
when approaching the boundary of the domain from inside.  The use of
the upper Minkowski dimension enables us to count the aforementioned
cubes in an efficient manner. Previously the upper Minkowski dimension
of the boundary has been used in studying weighted Poincar\'e
inequalities in \cite{Br} and \cite{EH-S}, but maybe not to its full
potential. On the other hand, the upper Minkowski dimension seems to
be a right tool for the $(q,p)$-Poincar\'e inequality with $q < p$,
see Lemma~\ref{ballcount} and Lemma~\ref{sest} in
Section~\ref{sect:Poincare}.

More precisely, we show that a $\beta$-quasihyperbolic boundary
condition domain with the upper Minkowski dimension of the boundary
being less than or equal to $\lambda\in [n-1, n)$ supports the
$(q,p)$-Poincar\'e inequality \eqref{poincare} with $1\le q<p<\infty$
if
\begin{equation}\label{bound:qhbc}
p>\frac{q(n-\lambda b)}{q+b(n-\lambda)}, \qquad b=\frac{2\beta}{1+\beta},
\end{equation}
see Theorem~\ref{sharp}. The right hand side of the
inequality in \eqref{bound:qhbc} is, as it should be, an
increasing function of $\lambda$, when $q<n-n b$. Namely a
quasihyperbolic boundary condition domain is more irregular and
Poincar\'e inequality \eqref{poincare} fails to hold more easily when
the upper Minkowski dimension of the boundary is larger. We also show
that the bound in \eqref{bound:qhbc} is essentially sharp in
essentially all the possible cases in the plane, see
Theorem~\ref{sharp_counter_plane} and Remark \ref{rmk:sharpness1}; and
we discuss sharpness of the bound in higher dimensions, see
Theorem~\ref{sharp_counter}, Theorem~\ref{thm:modification}, and
Remark~\ref{dimension}.

To show that our results are sharp in the plane we introduce a method
for modifying any given John domain in a controlled manner so that the
resulting domain is no more a John domain but it satisfies a
quasihyperbolic boundary condition.

The structure of this paper is as follows. In
Section~\ref{sect:notation} we recall the quasihyperbolic boundary
condition and some basic facts related to this condition and the
geometry of Whitney cubes; we also recall the shadow of a Whitney
cube. Lemma~\ref{ballcount}, Lemma~\ref{log_shadow} and
Lemma~\ref{sest} in Section~\ref{sect:Poincare} are the key
ingredients in the proof of the main result of the paper,
Theorem~\ref{sharp}. In Section~\ref{bversion} we modify a given John
domain in order to revoke its John properties and to obtain a
quasihyperbolic boundary condition domain. We use such a modification
in Section~\ref{sharp_plane} where we consider sharpness of our main
result in the plane. We close the paper by giving an application to
the solvability of the Neumann problem on quasihyperbolic boundary
condition domains in Section~\ref{sect:appls}.

\section{Notation and preliminaries}
\label{sect:notation}

Throughout the paper $G$ is a bounded domain (an open connected set)
in $\R^n$, $n\ge 2$. The closure, the interior, and the boundary of a
set $E\subset\R^n$ are denoted by $\overline{E}$, $\inte(E)$, and
$\partial E$, respectively. We write $\chi_E$ for the characteristic
function of $E$, and the Lebesgue $n$-measure of a measurable set $E$
is written as $\vert E\vert$. The Hausdorff dimension is denoted by
$\mathrm{dim}_{\mathcal{H}}(E)$. The upper Minkowski dimension of a
set $E$ is
\[
\dim_\mathcal{M}(E)=\sup \big\{\lambda\ge 0\,:\, \limsup_{r\to 0+}
\mathcal{M}_\lambda(E,r)=\infty\big\},
\]
where for each $r>0$
\[
\mathcal{M}_\lambda(E,r)=
\frac{\left|\bigcup_{x\in E}B^n(x,r)\right|}{r^{n-\lambda}}
\]
is the $\lambda$-dimensional Minkowski precontent.

The family of closed dyadic cubes is denoted by $\mathcal{D}$. The
side length of a cube $Q\subset \R^n$ is $\ell (Q)$ and its centre is
$x_Q$. We let $\mathcal{D}_j$ be the family of those dyadic cubes
whose side length is $2^{-j}$, $j\in\Z$. For a domain $G$ we fix a
Whitney decomposition $\W=\W_G\subset\mathcal{D}$.  We write
$\mathcal{W}_j=\mathcal{W}\cap \mathcal{D}_j$, $j\in\Z$, and
$\sharp\W_j$ is the number of those cubes in $\W$ whose side length is
$2^{-j}$. For a Whitney cube $Q\in\mathcal{W}$ let us write
$Q^*=\frac{9}{8}Q$. Then
\begin{equation*}
\diam(Q)\le \dist(Q,\partial G)\le 4 \diam(Q),
\end{equation*}
and $\sum \chi_{Q^*}\le 12^n$. 
For the construction of Whitney cubes we refer to
Stein~\cite{S}.

Let us fix a cube $Q_0\in\W$. Then for each $Q\in \W$ there exists a
chain of cubes, $\C(Q):=(Q_0^*,Q_1^*,\ldots \,,Q_k^*)$, joining
$Q_0^*$ to $Q_k^*=Q^*$ such that $Q_i^*\cap Q_j^*\neq\emptyset\, $ if
and only if $\vert i-j\vert\le 1$. The length of this chain is
$\ell(\C(Q))=k$. Moreover, the shadow $S(Q)$ of a cube $Q\in\W$ is
defined as follows
\begin{equation}\label{shadow}
S(Q)=\bigcup_{\substack{R\in\mathcal{W}\\Q^*\in \C(R)}} R.
\end{equation}
The quasihyperbolic distance between points $x$ and $y$ in $G$ is
defined as
\[
k_G(x, y) = \inf_\gamma \int_\gamma \frac{ds}{\dist(z, \partial G)},
\]
where the infimum is taken over all rectifiable curves $\gamma$
joining $x$ to $y$ in $G$.  In this paper we study bounded domains
satisfying the following quasihyperbolic boundary condition. Other
equivalent definitions can be found, e.g., in \cite[p. 25]{H}.
\begin{defn}\label{qhbc}
  A bounded domain $G\subset\R^n$ is said to satisfy a {\em
    $\beta$-quasihyperbolic boundary condition}, $\beta \in (0,1]$, if
  there exist a point $x_0\in G$ and a constant $c<\infty$ such that
  \begin{equation*}
    k_G(x,x_0)\le \frac{1}{\beta}\log\frac{1}{\dist (x, \partial G)}+c
  \end{equation*}
holds for every $x\in G$.
\end{defn}
The following theorem is from \cite[Theorem 5.1]{KR}.
\begin{thm}\label{KR_theorem}
  Suppose $G$ satisfies a $\beta$-quasihyperbolic boundary condition.
  Then
$\mathrm{dim}_{\mathcal{M}}(\partial G)\le n-c_n\, \beta^{n-1}$
with a constant $c_n>0$ depending only on the dimension $n$.
\end{thm}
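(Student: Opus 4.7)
The strategy is to translate the quasihyperbolic boundary condition into a quantitative bound on the number $\sharp\W_j$ of Whitney cubes of side length $2^{-j}$ and then deduce the Minkowski dimension estimate.

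First I would record the standard reduction: since the $r$-neighborhood of $\partial G$ is covered with bounded overlap by the enlarged Whitney cubes $Q^*$ with $\ell(Q)\lesssim r$, the bound $\sharp\W_j\le C\cdot 2^{j\lambda}$ for large $j$ yields $\M_\lambda(\partial G,r)=O(1)$ and hence $\dim_\M(\partial G)\le \lambda$. It therefore suffices to prove $\sharp\W_j\le C\cdot 2^{j(n-c_n\beta^{n-1})}$ for some $c_n>0$.

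Next, the quasihyperbolic boundary condition directly yields a chain-length bound. For $Q\in\W_j$ and $x\in Q$ one has $\dist(x,\partial G)\ge \ell(Q)/4=2^{-j-2}$, so Definition~\ref{qhbc} gives $k_G(x_0,x)\le \beta^{-1}(j+2)\log 2 + c$. Since consecutive cubes in the chain $\C(Q)$ contribute $O(1)$ to the quasihyperbolic distance, the length $\ell(\C(Q))$ is comparable to $k_G(x_0,x_Q)+1$, so every $Q\in\W_j$ is reachable from the base cube $Q_0$ by a chain of length at most $L:=Cj/\beta+C$.

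The main obstacle is the third step: extracting the sharp exponent $c_n\beta^{n-1}$ rather than the naive $c/\beta$ that a bounded-branching count of chains would furnish. A naive bound counting chains as words in the bounded alphabet of Whitney neighbors only gives $\sharp\W_j\le N^{L}\le 2^{Cj/\beta}$, which leads to $\dim_\M(\partial G)\le n-c/\beta$ and is useless for small $\beta$. My plan would be an $n$-modulus (equivalently $n$-capacity) argument for the curve family $\Gamma_j$ of Whitney chains terminating in cubes $Q\in\W_j$, exploiting that the $n$-modulus is the natural conformally invariant scale in $\R^n$. The quasihyperbolic condition provides an admissible test density of the form $\rho(x)\sim \beta\chi_G(x)/(\dist(x,\partial G)\log 2^j)$, producing an upper bound on $\mathrm{mod}_n(\Gamma_j)$ of order $\beta^{1-n}$ times a polynomial in $j$; on the other hand the shadows $S(Q)$ for distinct $Q\in\W_j$ are essentially disjoint, so the tube-modulus contributions add, and computing the modulus of a single chain-tube of length $L$ terminating in a cross-section of area $2^{-j(n-1)}$ yields a lower bound proportional to $\sharp\W_j\cdot 2^{-j(n-1)}L^{1-n}$. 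Comparing the two modulus estimates, inserting $L\sim j/\beta$, and absorbing polynomial-in-$j$ factors into an arbitrarily small $\varepsilon$-loss in the exponent produces $\sharp\W_j\le C_\varepsilon\cdot 2^{j(n-c_n\beta^{n-1}+\varepsilon)}$, which together with the first step completes the proof.
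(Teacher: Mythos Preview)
The paper does not provide a proof of this statement; it is quoted verbatim from Koskela and Rohde \cite{KR}, Theorem~5.1. There is therefore no in-paper argument to compare yours against, and your proposal must be judged on its own merits.

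Your first two steps are correct and standard: the Whitney count $\sharp\W_j\le C\,2^{j\lambda}$ controls the Minkowski precontent, and the $\beta$-quasihyperbolic boundary condition bounds the chain length to a cube in $\W_j$ by $O(j/\beta)$.

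Step three, however, contains genuine gaps. First, the proposed density $\rho(x)\sim \beta\,\chi_G(x)/(\dist(x,\partial G)\,j)$ is not obviously admissible: admissibility requires $\int_\gamma\rho\,ds\ge 1$ for every chain curve, and the \emph{lower} bound on the quasihyperbolic length of such a curve is $\gtrsim j$, not $\gtrsim j/\beta$, so the factor $\beta$ in $\rho$ makes the integral only $\gtrsim\beta$. Dropping the $\beta$ and computing the $n$-energy gives $\int_G\rho^n\lesssim j^{-n}\sum_{i\le j}\sharp\W_i$, and bounding this sum is precisely the Whitney count you are trying to establish; the argument is circular unless you introduce an induction or bootstrap that you have not described. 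Second, the claim that the shadows (or chain tubes) for distinct $Q\in\W_j$ are essentially disjoint is false: every chain $\C(Q)$ passes through $Q_0$, and near $Q_0$ the tubes overlap completely. Without a disjointness or bounded-overlap mechanism you cannot add the individual tube moduli to obtain a lower bound proportional to $\sharp\W_j$. Third, the asserted tube modulus $2^{-j(n-1)}L^{1-n}$ is not justified: a Whitney chain is a cone-like object whose cross-section varies over all scales from $O(1)$ down to $2^{-j}$, and the $n$-modulus of the connecting curve family in such a cone is of order $L^{1-n}$ with no extra $2^{-j(n-1)}$ factor. Plugging in $L\sim j/\beta$ then gives only trivial information.

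For orientation, the Koskela--Rohde proof does not go through modulus. They show that the $\beta$-quasihyperbolic boundary condition forces $\partial G$ to be \emph{mean porous}: following a quasihyperbolic geodesic from $x_0$ towards $\partial G$, the growth bound $k_G\le\beta^{-1}\log(1/\dist)+c$ forces, at a quantifiable proportion of dyadic scales, a Whitney-type ball in $G$ near the geodesic, hence a pore for $\partial G$. A separate combinatorial covering lemma in \cite{KR} converts mean porosity into a Minkowski-dimension deficit, and tracking the porosity parameters through that lemma is what produces the exponent $n-c_n\beta^{n-1}$. If you want to salvage a modulus approach, you would at minimum need a genuinely different packing of curve families that avoids the overlap at $Q_0$ and a non-circular upper bound on the energy; it is not clear that the sharp power of $\beta$ emerges from such an argument without, in effect, reproducing the porosity mechanism.
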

The notation $a\lesssim b$ means that an inequality $a\le cb$ holds
for some constant $c>0$ whose exact value is not important. We use
subscripts to indicate the dependence on parameters, for example,
$c_{\lambda}$ means that the constant depends only on the parameter
$\lambda$.

\section{Poincar\'e inequalities}
\label{sect:Poincare}

The following theorem is our main result.
\begin{thm}\label{sharp}
  Suppose $G$ satisfies a $\beta$-quasihyperbolic boundary condition,
  $\beta\in (0,1]$, and $\mathrm{dim}_{\mathcal{M}}(\partial G)\le
  \lambda\in [n-1,n)$.  If $1\le q<p<\infty$ are real numbers such that
\begin{equation}\label{oletus}
p>\frac{q(n-\lambda b)}{q+b(n-\lambda)},\qquad b=\frac{2\beta}{1+\beta},
\end{equation}
then $G$ supports the $(q,p)$-Poincar\'e inequality \eqref{poincare}.
\end{thm}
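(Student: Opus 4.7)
The strategy is a Whitney-chain decomposition in which the upper Minkowski dimension of $\partial G$ controls the final summation. Fix a base cube $Q_0 \in \W$. Since $|u_G - u_{Q_0^*}|^q |G|$ is absorbed by the left-hand side of \eqref{poincare} plus lower-order terms, it suffices to estimate $\int_G |u - u_{Q_0^*}|^q\,dx$. Splitting the integral along $\W$ and inserting the local averages $u_{Q^*}$ yields the standard two-term bound
\begin{equation*}
\int_G |u - u_{Q_0^*}|^q\,dx \;\lesssim\; \sum_{Q\in\W}\int_Q |u - u_{Q^*}|^q\,dx \;+\; \sum_{Q\in\W} |Q|\,|u_{Q^*} - u_{Q_0^*}|^q.
\end{equation*}

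For the first, ``local'' sum I would apply the Sobolev-Poincar\'e inequality on each cube $Q^*$ (valid since $q<p<\infty$ and $Q^*$ is a cube), followed by H\"older with exponents $p/q$ and $p/(p-q)$. The resulting geometric series in the sidelengths $\ell(Q)$ converges purely on the basis of $\lambda<n$, using the generational count $\sharp\W_j \lesssim 2^{j\lambda}$ supplied by Lemma~\ref{ballcount}.

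The substantive estimate is the second, ``global'' sum. Telescoping $u_{Q^*} - u_{Q_0^*}$ along the chain $\C(Q)$ and applying local Poincar\'e to each pair of adjacent, overlapping Whitney cubes yields
\begin{equation*}
|u_{Q^*} - u_{Q_0^*}| \;\lesssim\; \sum_{P^* \in \C(Q)} \ell(P)^{1 - n/p}\Bigl(\int_{P^{**}} |\nabla u|^p\,dx\Bigr)^{1/p},
\end{equation*}
where $P^{**}$ is a mild enlargement of $P^*$. Minkowski's inequality in $\ell^q$ then swaps the $Q$- and $P$-summations, and by the defining identity \eqref{shadow} the inner sum is $\sum_{Q\colon P^* \in \C(Q)} |Q| = |S(P)|$. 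A weighted H\"older inequality in $P$, with exponents $p$ and $p' = p/(p-1)$ and a weight $\ell(P)^\alpha$ chosen to cancel the $\ell(P)^{1-n/p}$ factor, separates the gradient energy $\|\nabla u\|_{L^p(G)}$ (invoking the bounded overlap of the $P^{**}$) from a purely geometric tail of the form $\sum_{P\in\W}\ell(P)^\sigma |S(P)|^\tau$, with $\sigma,\tau$ depending only on $p,q,n,\alpha$.

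Evaluating this geometric tail is where the remaining two key lemmas enter: Lemma~\ref{log_shadow} uses the $\beta$-QHBC to bound $|S(P)|$ in terms of $\ell(P)$ (with a logarithmic loss stemming from the chain-length estimate $\ell(\C(Q)) \lesssim \beta^{-1}\log(1/\ell(Q))$ provided by Definition~\ref{qhbc}), and Lemma~\ref{sest}, combined with the Minkowski count of Lemma~\ref{ballcount}, packages these bounds into a summation by generation. Regrouping over generations $j$, the tail collapses to a geometric series in $2^{-j}$ whose exponent is strictly negative precisely when \eqref{oletus} holds. The main obstacle is the arithmetic bookkeeping: one must tune $\alpha$ so that the critical exponent in the resulting geometric series depends on $(p,q,\lambda,\beta)$ in exactly the combination appearing in \eqref{oletus}, and the polynomial-in-$j$ losses from chain length and shadow bounds must be absorbed without eroding this sharp threshold.
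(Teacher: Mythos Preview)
Your skeleton---Whitney chains, shadows, and the stratification of Lemma~\ref{sest}---is correct, but the swap step has a genuine gap when $q>1$. After Minkowski in $\ell^q$ you arrive at
\[
\Big(\sum_{Q\in\W}|Q|\,|u_{Q^*}-u_{Q_0^*}|^q\Big)^{1/q}\le\sum_{P\in\W}\ell(P)^{1-n/p}\,|S(P)|^{1/q}\Big(\int_{P^{**}}|\nabla u|^p\Big)^{1/p},
\]
and the only H\"older pair that then isolates $\|\nabla u\|_{L^p(G)}$ is $(p,p')$; inserting a weight $\ell(P)^\alpha$ does not change this. The resulting geometric tail $\sum_{P}|S(P)|^{p'/q}\ell(P)^{(1-n/p)p'}$, evaluated through Lemma~\ref{sest}, converges only under a condition that differs from~\eqref{oletus} by exactly $\beta\lambda p(q-1)$ in the decisive exponent. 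Hence for $q>1$ your argument proves a strictly weaker result; for $q=1$ Minkowski is just Fubini and the two routes coincide.

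The paper avoids this loss by \emph{not} using Minkowski. It applies H\"older along each chain, introducing the factor $\ell(\C(Q))^{q-1}$, swaps by Fubini, and only then applies H\"older with exponents $(p/q,\,p/(p-q))$---the correct pair once the gradient sits inside the sum as $\big(\int_{A^*}|\nabla u|^p\big)^{q/p}$. The chain-length factor is precisely what Lemma~\ref{log_shadow} handles: it shows
\[
\sum_{\substack{Q\in\W\\ Q\subset S(A)}}\ell(\C(Q))^{q-1}|Q|\;\lesssim\;|S(A)|^{1-\varepsilon},
\]
trading the polylogarithmic weight for an $\varepsilon$-loss in the shadow exponent. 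Your description of Lemma~\ref{log_shadow} as ``bounding $|S(P)|$ in terms of $\ell(P)$'' is therefore also off; the shadow-diameter estimate $\diam(S(Q))\lesssim\ell(Q)^\beta$ is part of the proof of Lemma~\ref{sest} (via \cite[Lemma~2.8]{KOT}), not of Lemma~\ref{log_shadow}.
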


\begin{rem}
  Theorem \ref{sharp} is concerned with the case when the upper
  Minkowski dimension is bounded by $\lambda$.  Observe that the right
  hand side of the inequality in \eqref{oletus} is an increasing
  function of $\lambda$, when $q< n-n b$; recall from the introduction
  that this is the interesting case. This reflects the fact that a
  quasihyperbolic boundary condition domain is more irregular and the
  Poincar\'e inequality fails to hold more easily when the upper
  Minkowski dimension of the boundary is larger.
\end{rem}

\noindent{\em Preparations for the proof of Theorem \ref{sharp}}. Let
us choose $\lambda'\in (\lambda,n)$ such that inequality
\eqref{oletus} holds if $\lambda$ is replaced by $\lambda'$. Then
$\mathrm{dim}_{\mathcal{M}}(\partial G)<\lambda'$ and we may assume
that $\mathrm{dim}_{\mathcal{M}}(\partial G)$ is strictly less than
$\lambda\in [n-1,n)$. The following lemma from \cite[Lemma 4.4]{HH-SV}
relies on this strict inequality.
\begin{lem}\label{ballcount}
Let $K\subset \R^n$ be a compact set such that 
\[
\mathrm{dim}_{\mathcal{M}}(K)< \lambda
\]
where $\lambda\in [n-1,n)$. Assume that $\{B_1,B_2,\ldots,B_N\}$ is a
family of $N$ disjoint balls in $\R^n$, each of which is centered in
$K$ and whose radius is $r\in (0,1]$.  Then 
\[N\le cr^{-\lambda},
\]
where the constant $c$ is independent of the disjoint balls.
\end{lem}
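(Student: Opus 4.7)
\noindent\emph{Proof plan for Lemma~\ref{ballcount}.} The plan is a direct volume comparison combined with the definition of the upper Minkowski dimension. Since $\dim_{\M}(K) < \lambda$, the value $\lambda$ is not in the supremum defining $\dim_\M(K)$, so
\[
\limsup_{r\to 0^+} \M_\lambda(K,r) < \infty.
\]
Hence there exist $r_0 \in (0,1]$ and a finite constant $C_0$ (depending on $K$ and $\lambda$) such that $\M_\lambda(K,r) \le C_0$ for every $r\in(0,r_0]$.

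For the main range $r\in (0,r_0]$, I would observe that each ball $B_i=B^n(x_i,r)$ with $x_i\in K$ is contained in $\bigcup_{x\in K}B^n(x,r)$. Using disjointness of the balls and the definition of $\M_\lambda(K,r)$,
\[
N\,\omega_n r^n \;=\; \sum_{i=1}^N |B_i| \;\le\; \Bigl|\bigcup_{x\in K}B^n(x,r)\Bigr| \;=\; r^{n-\lambda}\,\M_\lambda(K,r) \;\le\; C_0\, r^{n-\lambda},
\]
where $\omega_n$ is the volume of the unit ball. Dividing by $\omega_n r^n$ gives $N \le (C_0/\omega_n)\,r^{-\lambda}$, which is the claimed estimate on this range.

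For $r\in(r_0,1]$ the bound follows from compactness: since $K$ is compact, all centers $x_i$ lie in a fixed bounded set, so the disjoint balls $B_i$ of radius at most $1$ are all contained in a bounded neighborhood of $K$. Volume comparison again bounds $N$ by a constant $M=M(K,r_0)$ independent of $r$. Because $r\le 1$ forces $r^{-\lambda}\ge 1$, we get $N \le M \le M\,r^{-\lambda}$. Taking $c:=\max\{C_0/\omega_n,\,M\}$ yields the uniform estimate for all $r\in(0,1]$.

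The argument is essentially routine; the only point that requires a little care is that the Minkowski precontent bound is only a priori available for small $r$, so the two ranges $(0,r_0]$ and $(r_0,1]$ must be handled separately, and this is the mildest of obstacles rather than a genuine difficulty.
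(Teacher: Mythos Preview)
Your argument is correct. The paper does not give its own proof of this lemma; it simply cites \cite[Lemma~4.4]{HH-SV}, and your volume-comparison argument via the Minkowski precontent is exactly the standard way to establish such a bound.
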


Let $Q_0\in\W$ and $x_0\in Q_0$ be fixed.  Choose any $Q\in \W$
and join $x_0$ to $x_Q$ by a quasihyperbolic geodesic. By using those
Whitney cubes that intersect the quasihyperbolic geodesic we find, as
in \cite[Proposition~6.1]{H}, a chain $\C(Q)$ connecting $Q_0$ to $Q$
such that
\begin{equation}\label{equ:H6.1}
\ell(\C(Q)) \le c_n k_G(x_0, x) + 1\le 5c_n\big(\ell(\C(Q))+1\big),
\end{equation}
for every $x\in Q$. 

\begin{lem}\label{log_shadow}
  Suppose $G$ satisfies a $\beta$-quasihyperbolic boundary condition,
  $\beta \in (0,1]$.  Let $\varepsilon\in (0,1)$ and $1\le q
  <\infty$. Then
\begin{equation}\label{eps_est}
\sum_{\substack{Q\in\mathcal{W}\\Q\subset S(A)}}  \ell(\C(Q))^{q-1} |Q| \le c|\,S(A)\,|^{1-\varepsilon},
\end{equation}
where $c$ is a positive constant, independent of $A\in\W$.
\end{lem}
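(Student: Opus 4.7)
The plan is to decouple the chain-length weight $\ell(\mathcal{C}(Q))^{q-1}$ from the measure $|Q|$ via Hölder's inequality, peeling off the factor $|S(A)|^{1-\varepsilon}$ and reducing matters to a uniform (in $A$) summability estimate over all Whitney cubes.

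\emph{Step 1 (Hölder).} For each $Q \subset S(A)$ split $|Q| = |Q|^{\varepsilon}\cdot |Q|^{1-\varepsilon}$ and apply Hölder with exponents $1/\varepsilon$ and $1/(1-\varepsilon)$:
\begin{equation*}
\sum_{\substack{Q\in\W\\ Q\subset S(A)}} \ell(\mathcal{C}(Q))^{q-1}|Q| \le \biggl(\sum_{\substack{Q\in\W\\ Q\subset S(A)}} \ell(\mathcal{C}(Q))^{(q-1)/\varepsilon}|Q|\biggr)^{\!\varepsilon} \biggl(\sum_{\substack{Q\in\W\\ Q\subset S(A)}} |Q|\biggr)^{\!1-\varepsilon}.
\end{equation*}
The second factor equals $|S(A)|^{1-\varepsilon}$ because Whitney cubes have pairwise disjoint interiors. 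It remains to show that the first factor is bounded by a constant independent of $A$, for which I would replace the restricted sum by the sum over all $Q\in\W$.

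\emph{Step 2 (Quasihyperbolic length bound).} Combining \eqref{equ:H6.1} with the $\beta$-quasihyperbolic boundary condition, and using $\dist(x,\partial G)\gtrsim \ell(Q)$ for $x\in Q\in\W$, I would derive a logarithmic control
\begin{equation*}
\ell(\mathcal{C}(Q)) \le C\bigl(1+\log(1/\ell(Q))\bigr),
\end{equation*}
with $C$ depending on $\beta$, $n$, and the constant in Definition~\ref{qhbc}. Hence for $\ell(Q)=2^{-j}$ one has $\ell(\mathcal{C}(Q))\lesssim 1+j$.

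\emph{Step 3 (Counting Whitney cubes per generation).} Using the Preparatory assumption that $\dim_{\mathcal{M}}(\partial G)$ is strictly less than $\lambda$ and invoking Lemma~\ref{ballcount}, I would show that
\begin{equation*}
\sharp \W_j \le c\, 2^{j\lambda}
\end{equation*}
for every $j$ (sufficiently large). To see this, for each $Q\in\W_j$ pick a nearest boundary point $y_Q\in\partial G$; by the Whitney property $y_Q$ lies within distance $\lesssim 2^{-j}$ of $Q$, and the number of cubes $Q\in\W_j$ sharing a common $y_Q$ (up to a ball of radius $\sim 2^{-j}$) is bounded by a dimensional constant. Extracting a maximal family with $B(y_{Q_i},c\,2^{-j})$ disjoint, Lemma~\ref{ballcount} bounds its cardinality by $c\,2^{j\lambda}$, and the covering reduction yields the claim.

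\emph{Step 4 (Convergence and conclusion).} Setting $s=(q-1)/\varepsilon$ and combining Steps 2 and 3 gives
\begin{equation*}
\sum_{Q\in\W} \ell(\mathcal{C}(Q))^{s}|Q| \lesssim \sum_{j\ge j_0} 2^{j\lambda}(1+j)^{s}\, 2^{-jn} = \sum_{j\ge j_0}(1+j)^{s}\, 2^{-j(n-\lambda)} <\infty,
\end{equation*}
because $\lambda<n$; here $j_0$ is determined by $\diam(G)$. Plugging this back into Step 1 yields \eqref{eps_est} with a constant independent of $A\in\W$.

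The main obstacle is the counting estimate in Step 3: converting the Minkowski-dimension hypothesis, as encoded in Lemma~\ref{ballcount}, into the per-generation bound $\sharp\W_j\lesssim 2^{j\lambda}$ requires carefully choosing representative boundary points and using the bounded-overlap properties of Whitney cubes. Once this is in place, the Hölder splitting and the geometric series are essentially routine.
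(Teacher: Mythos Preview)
Your proof is correct and follows the same overall strategy as the paper: a H\"older split with exponents $1/\varepsilon$ and $1/(1-\varepsilon)$ (the paper writes these as $r'$ and $r$ with $r=1/(1-\varepsilon)$), peeling off $|S(A)|^{1-\varepsilon}$ and reducing to the uniform finiteness of $\sum_{Q\in\W}\ell(\mathcal{C}(Q))^{s}|Q|$.

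The difference lies in how that summability is established. The paper passes, via \eqref{equ:H6.1}, to $\sum_{Q\in\W}(k_G(x_0,x_Q)+1)^{s}|Q|$ and then invokes \cite[Theorem~7.7]{H} together with the Whitney-$\sharp$ condition from \cite[Corollary~1]{SS}; this uses only the quasihyperbolic boundary condition and no Minkowski-dimension input. You instead argue directly: the quasihyperbolic boundary condition yields $\ell(\mathcal{C}(Q))\lesssim 1+j$ on $\W_j$, and Lemma~\ref{ballcount} gives $\sharp\W_j\lesssim 2^{j\lambda}$, whence the series converges since $\lambda<n$. Your route is more self-contained (no appeal to external integrability theorems) but pulls in the Minkowski-dimension hypothesis, which is not part of the lemma's statement; this is harmless in the present context because of the preparatory reduction, and in any case Theorem~\ref{KR_theorem} supplies some $\lambda<n$ for every $\beta$-quasihyperbolic boundary condition domain.
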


\begin{proof}
By inequality \eqref{equ:H6.1}
\begin{equation}\label{split_2}
\Sigma:=\sum_{\substack{Q\in\mathcal{W}\\Q\subset S(A)}}  \ell(\C(Q))^{q-1} |Q| \lesssim
\sum_{\substack{Q\in\mathcal{W}\\Q\subset S(A)}} \big(k_G(x_0, x_Q) + 1\big)^{q-1} |Q|.
\end{equation}
We employ H\"older's inequality with $r\in (1,\infty)$ and
$r'=r/(r-1)$ and estimate as follows
\[
\begin{split}
\Sigma 
&\lesssim
\bigg( \sum_{\substack{Q\in\mathcal{W}\\Q\subset S(A)}} \big(k_G(x_0, x_Q)+1\big)^{r'(q-1)} |Q|^{(1-1/r) r'} \bigg)^{1/r'}  \bigg( \sum_{\substack{Q\in\mathcal{W}\\Q\subset S(A)}} |Q|\bigg)^{1/r}\\
&\le 
\bigg( \sum_{\substack{Q\in\mathcal{W}}} 
\big(k_G(x_0, x_Q)+1\big)^{r'(q-1)}
 |Q| \,\bigg)^{1/r'}  |\,S(A)\,|^{1/r}.
\end{split}
\]
By inequality \eqref{equ:H6.1} and \cite[Theorem~7.7]{H} the last
series is finite, and its least upper bound depends only on $n$, $q$,
$r$, $x_0$, and $G$. Indeed, by \cite[Corollary 1]{SS}, domain $G$
satisfies the required Whitney-$\sharp$ condition.

The above estimates give 
\[\Sigma 
\le c|\,S(A)\,|^{1/r},\] where the constant $c$ depends only on $q$,
$n$, $r$, $x_0$, and $G$.  Choosing $r=1/(1-\varepsilon)>1$ gives
inequality \eqref{eps_est}.
\end{proof}

We write $[s]$ for the integer part of $s\in\R$.
\begin{lem}\label{sest}
  Suppose that $G$ satisfies a $\beta$-quasihyperbolic boundary condition,
  $\beta\in (0,1]$, and denote $b=2\beta/(1+\beta)$.  Suppose further that
  $\mathrm{dim}_{\mathcal{M}}(\partial G)< \lambda$, where $\lambda
  \in [n-1,n)$.  Then there is a number $\sigma\ge 1$ such that
\begin{equation}\label{kirjoitus}
\mathcal{W}_j= \bigcup_{k=0}^{[j-jb]} \mathcal{W}_{j,k,\sigma}
\end{equation}
for every $j\in\N$,
where 
\[
\mathcal{W}_{j,k,\sigma}:=
\{Q\in\mathcal{W}_j\,:\,2^{-(j-k)n}\le |S(Q)|\le \sigma 2^{-(j-k-1)n}\}.
\]
Also, if $k\in \{0,1,\ldots,[j-jb]\}$, then
\begin{equation}\label{sid}
\sharp\mathcal{W}_{j,k,\sigma}\le 
c j\,2^{n(j-k)+jb(\lambda-n)}.
\end{equation}
Here $c$ is a positive constant independent of $j$ and $k$.
\end{lem}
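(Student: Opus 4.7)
The plan is to combine the quasihyperbolic boundary condition with the Minkowski-dimension bound $\sharp\W_{j'}\lesssim 2^{j'\lambda}$ implicit in the hypothesis $\dim_\M(\partial G)<\lambda$, and to organise $\W_{j,k,\sigma}$ by chain length so that a Lemma~\ref{ballcount}-type argument becomes available. For the decomposition \eqref{kirjoitus}, the lower bound $|S(Q)|\ge 2^{-jn}$ is automatic from $Q\subset S(Q)$, and the nontrivial point is a uniform upper bound $|S(Q)|\le C\cdot 2^{-j\beta n}$ for $Q\in\W_j$. I would argue that any $R\in S(Q)\cap\W_{j_R}$ satisfies $\ell(\C(R))\ge\ell(\C(Q))$ by the definition of shadow; then \eqref{equ:H6.1} and Definition~\ref{qhbc} give $\ell(\C(R))\lesssim j_R/\beta$, while the standard lower estimate $k_G(x_0,x_Q)\gtrsim j$ forces $\ell(\C(Q))\gtrsim j$. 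Consequently $j_R\gtrsim\beta j$, so $S(Q)$ lies in the $2^{-\beta j}$-neighbourhood of $\partial G$, and a summation with $\sharp\W_{j_R}\lesssim 2^{j_R\lambda}$ yields the upper bound. Once $2^{-jn}\le|S(Q)|\le C\cdot 2^{-j\beta n}$ is known, choosing $\sigma\ge 2^nC$ produces for each $Q\in\W_j$ some $k\in\{0,\ldots,[j-j\beta]\}$ with $|S(Q)|\in[2^{-(j-k)n},\sigma\cdot 2^{-(j-k-1)n}]$, proving \eqref{kirjoitus}.

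For the count \eqref{sid}, I would partition
\[
\W_{j,k,\sigma}=\bigcup_L\W_{j,k,\sigma,L},\qquad\W_{j,k,\sigma,L}:=\{Q\in\W_{j,k,\sigma}:\ell(\C(Q))=L\}.
\]
By \eqref{equ:H6.1}, the quasihyperbolic boundary condition, and $k_G(x_0,x_Q)\gtrsim j$, the admissible values of $L$ lie in an interval of length $\lesssim j$, which eventually yields the factor $j$ in \eqref{sid}. Within a single $\W_{j,k,\sigma,L}$ I would show that the shadows $\{S(Q)\}$ are pairwise disjoint: if $R\in S(Q_1)\cap S(Q_2)$, then both $Q_1^*$ and $Q_2^*$ appear at position $L$ in the chain $\C(R)$, forcing $Q_1=Q_2$ once the chains $\{\C(Q)\}_{Q\in\W}$ have been selected consistently as a tree rooted at $Q_0^*$. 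Using the containment in the $2^{-\beta j}$-neighbourhood of $\partial G$ from the previous paragraph together with the Minkowski-content estimate $|\{x\in\Rn:\dist(x,\partial G)\lesssim 2^{-\beta j}\}|\lesssim 2^{-\beta j(n-\lambda)}$ that follows from $\dim_\M(\partial G)<\lambda$, one then obtains
\[
\sharp\W_{j,k,\sigma,L}\cdot 2^{-(j-k)n}\le\sum_{Q\in\W_{j,k,\sigma,L}}|S(Q)|\lesssim 2^{-\beta j(n-\lambda)},
\]
and summation over the $\lesssim j$ admissible values of $L$ delivers \eqref{sid}.

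The hard part will be tracking constants precisely enough to obtain the clean exponents $j\beta n$ and $j\beta(n-\lambda)$, rather than weaker multiples $c_\ast j\beta n$ with $c_\ast<1$ that a naive application of \eqref{equ:H6.1} would produce. This rests on using the near-geodesic construction of the chains behind \eqref{equ:H6.1} and the quasihyperbolic boundary condition in tandem, and on fixing a consistent tree-like choice of chains so that the shadows within each $\W_{j,k,\sigma,L}$ are exactly disjoint rather than only of bounded overlap.
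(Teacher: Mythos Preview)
Your approach has a genuine gap that you partly anticipate but do not close. For \eqref{kirjoitus} you need the bound $|S(Q)|\le C\,2^{-j\beta n}$, but your argument only shows that each $R\subset S(Q)$ lies in a neighbourhood of $\partial G$; the summation you propose yields at best
\[
|S(Q)|\lesssim \sum_{j_R\ge c_\ast\beta j}\sharp\W_{j_R}\,2^{-j_R n}\lesssim 2^{-c_\ast\beta j(n-\lambda)},
\]
not $2^{-j\beta n}$. Even with $c_\ast=1$ this is too weak, because $\lambda\ge n-1>0$. What is missing is a \emph{diameter} estimate $\diam(S(Q))\lesssim 2^{-j\beta}$, not merely a bound on $\dist(R,\partial G)$ for $R\subset S(Q)$. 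The paper imports exactly this from \cite[Lemma~2.8]{KOT}, which gives $\diam(S(Q))\le c_n\,\dist(x_0,\partial G)^{1-\beta}\diam(Q)^\beta$ with the sharp exponent; its proof uses the quasihyperbolic boundary condition along the geodesic underlying $\C(R)$ and cannot be recovered from the two-sided comparison \eqref{equ:H6.1} alone---the factors $c_n$ and $5c_n$ there are precisely the source of your $c_\ast<1$.

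For \eqref{sid} the paper also proceeds differently and avoids the tree hypothesis: it covers $\partial G$ by a disjoint family $\mathcal{F}$ of balls of radius $2^{-j\beta}$, uses the inclusion $S(Q)\subset c_2B$ (from the diameter bound) together with the overlap estimate $\sharp\{Q\in\W_j:Q^*\in\C(R)\}\le c_nj$ of \cite[Lemma~2.5]{KOT} to bound the number of $Q\in\W_{j,k,\sigma}$ meeting each $c_1B$, and then invokes Lemma~\ref{ballcount} to obtain $\sharp\mathcal{F}\lesssim 2^{j\lambda\beta}$. Your partition by chain length and disjoint-shadow argument is a nice idea, and with the diameter bound in hand it would also yield \eqref{sid}; but it requires the chains to form a rooted tree, which is not how the geodesic-based chains behind \eqref{equ:H6.1} are built, so you would have to supply that construction separately.
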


\begin{proof}
  Let us fix $j\in\N$. The $5r$-covering theorem, \cite[p. 23]{Mat95},
  implies that there is a finite family
\[\mathcal{F}\subset \{B^n(x,2^{-jb})\,:\,x\in\partial G\}\] of disjoint balls
such that
\begin{equation}\label{cover}
\partial G\subset \bigcup_{B\in\mathcal{F}} 5B.
\end{equation}
We claim that, if $Q\in\mathcal{W}_{j}$, there exists a ball
$B\in\mathcal{F}$ such that
\begin{equation}\label{ball_incl}
Q\subset c_1 B.
\end{equation} 
Here $c_1$ is a constant depending on $n$ only.  To verify this let
$y\in \partial G$ be a nearest point in $\partial G$ to the centre
$x_Q$ of $Q$. By inclusion \eqref{cover} there is a point $x$ in
$\partial G$ such that $B^n(x,2^{-jb})\in\mathcal{F}$ and $y\in
B^n(x,5\cdot 2^{-jb})$. If $z\in Q$ the triangle inequality
implies
\[
|z-x|\le |z-x_Q|+|x_Q-y|+|y-x|\le c2^{-j}+c2^{-j}+5\cdot 2^{-jb}<
c_12^{-jb}.
\]
Inclusion \eqref{ball_incl} follows because $Q\subset
B^n(x,c_1\,2^{-jb})=c_1\,B^n(x,2^{-jb})$.

Let us fix $Q\in\mathcal{W}_j$ and a ball $B:=B^n(x,2^{-jb})$ in
$\mathcal{F}$ such that $Q\subset c_1B$.  We claim that
\begin{equation}\label{cont}
S(Q)\subset B^n(x,c_2\,2^{-jb}),
\end{equation}
where $c_2>c_1$ is a constant which depends on $\beta$, $n$, $x_0$,
and $G$ only.  To prove inclusion \eqref{cont}, let $R\in\mathcal{W}$
be a cube such that $Q^*\in \C(R)$. Then $R\subset S(Q)$. By
\cite[Lemma 6]{JK}, 
\begin{align*}
\mathrm{diam}(R)\le \diam(S(Q))&
\le c_{\beta,n,x_0,G}\diam(Q)^{2\beta/(1+\beta)}
= c2^{-jb}.
\end{align*}
Then, if $y\in R$, the triangle inequality gives
\begin{align*}
  |y-x| &\le |y-x_R|+|x_R-x_Q|+|x_Q-x| \le c2^{-jb} +
  c2^{-jb} + c_12^{-jb} < c_2 2^{-jb}.
\end{align*}
Inclusion \eqref{cont} follows.

As a consequence of \eqref{cont}, we obtain
\[
2^{-jn}=|Q|\le |S(Q)|\le \sigma 2^{-jn b}
\]
with a constant $\sigma\ge 1$, depending on $\beta$, $n$, $x_0$, and
$G$ only. Identity \eqref{kirjoitus} is valid with this constant.

To prove estimate \eqref{sid} we use the following inequality
\begin{equation}\label{eks}
  \sharp \{Q\in\mathcal{W}_{j}\,:\,Q^*\in \C(R)\}\le c_{\beta,n,x_0,G}j,\qquad R\in\mathcal{W},
\end{equation}
from \cite[Lemma 2.5]{KOT}.

Let us fix $k\in \{0,1,\ldots,[j-jb]\}$ and let us consider an
arbitrary ball $B:=B^n(x,2^{-jb})$ in $\mathcal{F}$.  We estimate
the number of cubes that are included in $c_1B$.  By inclusion
\eqref{cont}
\begin{align*}
  &\sharp\{Q\in\mathcal{W}_{j,k,\sigma}\,:\, Q\subset c_1 B\} \\&\le
  \sum_{\substack{Q\in\mathcal{W}_{j,k,\sigma}\\Q\subset c_1B}}
  2^{(j-k)n}|S(Q)|
  \le 2^{(j-k)n}\sum_{\substack{Q\in\mathcal{W}_{j,k,\sigma}}} |S(Q)\cap c_2B|\\
  &\le
  2^{(j-k)n}\sum_{\substack{Q\in\mathcal{W}_{j,k,\sigma}}}\sum_{\substack{R\in\mathcal{W}
      \\ Q^*\in \C(R)}} |R\cap c_2B| =
  2^{(j-k)n}\sum_{R\in\mathcal{W}}
  \sum_{\substack{Q\in\mathcal{W}_{j,k,\sigma} \\ Q^*\in \C(R)}}
  |R\cap c_2B|.
\end{align*}
Inequality \eqref{eks} shows that the last sum is bounded by
\[
c_{\beta,n,x_0,G} j 2^{(j-k)n}|c_2 B|\le c_3 j 2^{-kn}2^{j(n-nb)}
\]
with a constant $c_3>0$ which depends on $\beta$, $n$, $x_0$, and $G$ only.

Inclusion \eqref{ball_incl} implies that
\begin{equation}\label{cos}
  \sharp \mathcal{W}_{j,k,\sigma}  \le
  \sum_{B\in\mathcal{F}}  \sharp\{Q\in\mathcal{W}_{j,k,\sigma}\,:\,Q\subset c_1B\}\le c_3\sum_{B\in\mathcal{F}} j\,2^{-kn}2^{j(n-nb)}.
\end{equation}
Recall that $\mathcal{F}$ is a family of disjoint balls, each of which
are centered in $\partial G$ with radius $2^{-jb}\in (0,1]$.
Therefore, Lemma~\ref{ballcount} yields
\[\sharp\mathcal{F}\le c2^{j\lambda b}.\]
Combining this estimate with inequalities \eqref{cos} yields
\[
\sharp \mathcal{W}_{j,k,\sigma}\le cj\,2^{j\lambda b }2^{-kn}2^{j(n-nb)},
\]
which is estimate \eqref{sid}.
\end{proof}

\begin{proof}[Proof of Theorem \ref{sharp}]
  We may assume, by scaling, that $\diam (G)<1$. Hence
  $\W=\bigcup_{j=0}^\infty\W_j$.  Using H\"older's inequality, and
  inequalities $|a+b|^q\le 2^{q-1}(|a|^q+|b|^q)$ and $|a+b|^{1/q}\le
  |a|^{1/q}+|b|^{1/q}$, $a,b\in\R$, we obtain
  \begin{align} \label{eq:PI} & \bigg(\int_G|u(x)-u_G|^q \,
      dx\bigg)^{1/q} \leq 2 \bigg(\int_G|u(x)-u_{Q_0^*}|^q\, dx\bigg)^{1/q}  \nonumber \\
    & \quad \lesssim \bigg(\sum_{Q\in\W}\int_{Q^*}|u(x)-u_{Q^*}|^q\, dx\bigg)^{1/q}
    + \bigg(\sum_{Q\in\W}\int_{Q^*}|u_{Q^*}-u_{Q_0^*}|^q\,
      dx\bigg)^{1/q}.
\end{align}
The first term on the right hand side of \eqref{eq:PI} is estimated by
the $(q,p)$-Poincar\'e inequality in cubes and by H\"older's
inequality,
\[
\bigg(\sum_{Q\in\W}\int_{Q^*}|u(x)-u_{Q^*}|^q\, dx\bigg)^{1/q}
\lesssim \bigg(\int_G|\nabla u(x)|^p\, dx\bigg)^{1/p}.
\]
For the second term on the right hand side of \eqref{eq:PI} let us
connect $Q_0$ to every cube $Q\in\W$ by a chain $\C(Q)=
(Q_0^*,Q_1^*,\ldots \,,Q_k^*)$, $Q_k^*=Q^*$, that is constructed by
using quasihyperbolic geodesics as in connection with
\eqref{equ:H6.1}. By the triangle and H\"older's inequalities, by the
properties of Whitney cubes and by the validity of $(q,p)$-Poincar\'e
inequality in cubes we obtain
\begin{align*}
  &\sum_{Q\in\W}\int_{Q^*} |u_{Q^*}-u_{Q_0^*}|^q\, dx\lesssim
  \sum_{Q\in\W}\int_{Q^*} 
\ell(\C(Q))^{q-1}
  \sum_{j=0}^{k-1}|u_{Q_{j}^*}-u_{Q_{j+1}^*}|^q\, dx\\
  &\lesssim \sum_{Q\in\W}\int_{Q^*} 
  \ell(\C(Q))^{q-1}
\sum_{j=0}^{k}
  |Q_{j}^*|^{-1}\int_{Q_{j}^*}
  |u(y)-u_{Q_{j}^*}|^q\,dy\, dx \\
  & \lesssim\sum_{Q\in\W}\int_{Q^*} 
  \ell(\C(Q))^{q-1}
  \sum_{j=0}^{k}
  |Q_{j}^*|^{q/n-q/p}\bigg(\int_{Q_{j}^*} |\nabla u(y)|^p\,dy\bigg)^{q/p}\, dx\,.
\end{align*}
By rearranging the double sum and by using H\"older's inequality with
$(p/q, p/(p-q))$ we obtain
\begin{align*}
  &\sum_{Q\in\W}\int_{Q^*} |u_{Q^*}-u_{Q_0^*}|^q\, dx \\
  & \quad \lesssim \sum_{A\in \W} \sum_{\substack{Q\in \W\\
      Q\subset S(A)}}\ell(\C(Q))^{q-1}|Q||A|^{q/n-q/p}
  \biggl(\int_{A^*}|\nabla u(x)|^p\,dx\biggr)^{q/p} \\
  & \quad \lesssim \bigg( \sum_{A\in \W} \biggl(\sum_{\substack{Q\in \W\\
      Q\subset
      S(A)}}\ell(\C(Q))^{q-1}|Q||A|^{q/n-q/p}\biggr)^{p/(p-q)}\bigg)^{(p-q)/p}
  \bigg(\int_{G}|\nabla u(x)|^p\, dx\bigg)^{q/p}\,.
\end{align*}
We write 
\[
\Sigma:=\sum_{A\in \W} \bigg(\sum_{\substack{Q\in \W\\ Q\subset
     S(A)}}\ell(\C(Q))^{q-1}|Q||A|^{q/n-q/p}\bigg)^{p/(p-q)}.
\]
Hence it is enough to show that the quantity $\Sigma$ is finite.
The preceding part of the proof followed the chaining argument in
\cite[Theorem 3.2]{HH-SV}, which is nowadays a standard approach
dating back to \cite{J}.  

For the following computation, it is convenient to denote
$b=2\beta/(1+\beta)$. Fix $\varepsilon\in (0,q/p)$. By
Lemma~\ref{log_shadow},
\[
\Sigma\lesssim \sum_{A\in \mathcal{W}} \big( |\,S(A)\,|^{1-\varepsilon}\, |A|^{q/n-q/p}\big)^{p/(p-q)}.
\]
By \eqref{kirjoitus} we obtain
\begin{align*}
  \Sigma &\lesssim \sum_{j=0}^\infty
  \sum_{k=0}^{[j-jb]}\sum_{A\in\mathcal{W}_{j,k,\sigma}} \big(
  |S(A)|^{1-\varepsilon}\, |A|^{q/n-q/p}\big)^{p/(p-q)}.
\end{align*}   Definition of
$\mathcal{W}_{j,k,\sigma}$ and inequality \eqref{sid} imply
\begin{align*}
\Sigma&\lesssim \sum_{j=0}^\infty \sum_{k=0}^{[j-jb]}  j2^{n(j-k)+jb(\lambda-n)}\big( 2^{-n(j-k)(1-\varepsilon)}2^{-jnq(1/n-1/p)}\big)^{p/(p-q)}\\
&= \sum_{j=0}^\infty \sum_{k=0}^{[j-jb]}  j2^{kn(p(1-\varepsilon)/(p-q)-1)} 
2^{j(n+b(\lambda-n)-n(1-\varepsilon)p/(p-q)-qp/(p-q)+nq/(p-q))}.
\end{align*}
Let $j$ and $k$ be as in the summation. Then,
\[
kn\bigg(\frac{p(1-\varepsilon)}{p-q}-1\bigg) \le n(j-jb)\bigg(\frac{p(1-\varepsilon)}{p-q}-1\bigg)
=\frac{jn(1-b)(q-\varepsilon p)}{p-q}.
\]
By the estimate $[j-jb]\le j$, where $b\in (0,1]$,
\begin{align*}
  \Sigma &\lesssim \sum_{j=0}^\infty j^2 2^{j(n(1-b)(q-\varepsilon p)/(p-q)+n+b(\lambda-n)-n(1-\varepsilon)p/(p-q)-qp/(p-q)+nq/(p-q))}\\
  &\lesssim \sum_{j=0}^\infty j^2 2^{\varepsilon
    j(np/(p-q)-n(1-b)p/(p-q))} 2^{j(b(\lambda-n) -q((b-1)n+p)/(p-q))}.
\end{align*}
Inequality \eqref{oletus} allows us to choose $\varepsilon>0$ so small that
the last series converges.
\end{proof}

We have the following corollary of Theorem~\ref{sharp}.

\begin{cor}\label{k}
  Suppose $1\le q<n-nb$, where $b=2\beta/(1+\beta)$ and $\beta \in
  (0,1)$.  If $G$ satisfies a $\beta$-quasihyperbolic boundary
  condition, then $G$ supports the $(q,n-nb)$-Poincar\'e inequality
  \eqref{poincare}.
\end{cor}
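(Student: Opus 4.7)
The plan is to derive the corollary as a direct application of Theorem \ref{sharp} with the choice $p := n - n\beta$. Two items must be verified: that there exists an admissible $\lambda \in [n-1, n)$ with $\mathrm{dim}_{\mathcal{M}}(\partial G) \le \lambda$, and that the main hypothesis \eqref{oletus} is satisfied with this $p$ and some such $\lambda$.

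For the first item, Theorem \ref{KR_theorem} yields $\mathrm{dim}_{\mathcal{M}}(\partial G) \le n - c_n\beta^{n-1}$, which is strictly less than $n$ since $\beta > 0$. Hence $\lambda := \max\{n - 1,\; n - c_n\beta^{n-1}\}$ lies in $[n-1, n)$ and dominates the Minkowski dimension. For the second item, I would inspect the right-hand side of \eqref{oletus} as a function of $\lambda$: at $\lambda = n$ it equals $q(n - n\beta)/q = n - n\beta$, and, as already noted in the paper, it is strictly increasing in $\lambda$ under the standing assumption $q < n - n\beta$. Therefore, for every $\lambda \in [n-1, n)$, the right-hand side of \eqref{oletus} is strictly less than $n - n\beta = p$, so \eqref{oletus} holds. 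Equivalently, a short algebraic check — clearing the positive denominator $q + \beta(n - \lambda)$ — reduces \eqref{oletus} to $\beta q (n - \lambda) < \beta (n - n\beta)(n - \lambda)$, i.e. $q < n - n\beta$, which is the hypothesis of the corollary.

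With both conditions in hand, Theorem \ref{sharp} produces the $(q, n - n\beta)$-Poincaré inequality \eqref{poincare}, completing the proof. There is no real obstacle here: the corollary is the endpoint formulation of Theorem \ref{sharp} obtained by pushing $\lambda$ toward $n$, combined with the Koskela--Rohde bound (Theorem \ref{KR_theorem}) which guarantees that some admissible $\lambda < n$ is always available.
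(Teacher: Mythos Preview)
Your proposal is correct and follows essentially the same strategy as the paper: invoke Theorem~\ref{KR_theorem} to obtain an admissible $\lambda<n$, verify \eqref{oletus} for $p=n-n\beta$, and apply Theorem~\ref{sharp}. The only cosmetic difference is in how \eqref{oletus} is checked: the paper views the threshold as a function $p(t)=t(n-\beta\lambda)/(t+\beta(n-\lambda))$ of the lower exponent and uses the identity $p(n-n\beta)=n-n\beta$ together with monotonicity in $t$, whereas you view it as a function of $\lambda$ and evaluate at the endpoint $\lambda=n$ (or, equivalently, do the one-line algebra). Both routes reduce to the hypothesis $q<n-n\beta$, so the arguments are interchangeable.
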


\begin{proof}
By Theorem \ref{KR_theorem}, $\lambda=\mathrm{dim}_{\mathcal{M}}(\partial G)<n$.
Observe that $r=n-nb$ satisfies
the identity
\[
p(r)= \frac{r(n-\lambda b)}{r+b(n-\lambda)} = r.
\]
On the other hand, since $1\le q<r$ and
$p'(t)>0$ if $t>0$, 
we obtain that $p(q)<p(r)=r=n-n b$. The
claim follows from Theorem \ref{sharp}.
\end{proof}

\section{Modification of a John domain}
\label{bversion}

In this section, we introduce a method how to modify a given John
domain $G$ in a controlled way such that the resulting domain, denoted
by $G_{\beta}$ and called a $\beta$-version of $G$, is no more a John
domain but it satisfies a $\beta/4$-quasihyperbolic boundary condition
if $\beta\le \varkappa c_J$. Here $\varkappa$ is a constant depending
on $n$ only, and $c_J$ is the John constant of $G$ (see
Definition~\ref{sjohn}). By studying the validity of Poincar\'e
inequalities \eqref{poincare} on these $\beta$-versions of John
domains we shall show that Theorem~\ref{sharp} is essentially sharp in
the plane. 
Theorems \ref{sharp_counter} and \ref{thm:modification} 
are the main results of this section.

We assume that $G$ is a bounded domain such that $\diam(G)\le 4$ in
Section~\ref{bversion} and Section~\ref{sharp_plane}. We recall the
following definition.
\begin{defn}\label{sjohn}
  A bounded domain $G\subset\R^n$ is a {\em John domain}, if there
  exist a point $x_0$ in $G$ and a constant $c_J\in (0,1]$ such that
  every point $x\in G$ can be joined to $x_0$ by a rectifiable curve
  $\gamma:[0,\ell(\gamma)]\to G$ which is parametrized by its arc length,
  $\gamma(0)=x$, $\gamma(\ell(\gamma))=x_0$, and
\begin{equation}\label{eq:John}
\dist(\gamma(t),\partial G)\ge c_J t
\end{equation}
for each $t\in [0,\ell(\gamma)]$. The point $x_0$ is called a {\em John center}
of $G$, and the largest $c_J\in (0,1]$ is called the {\em John
  constant} of $G$.  The curve $\gamma$ is called a {\em John curve}.
\end{defn}

Let us fix $\beta \in (0,1]$ and let $Q\subset\R^n$ be a closed cube
that is centered at $x_Q=(x_1,\ldots,x_n)$, and whose side length is
$\ell(Q)=\ell\le 4$, i.e. 
\[
Q:=\prod_{i=1}^n\,[x_i-\ell/2,\, x_i+\ell/2].
\]
The {\em room} in $Q$ is the open cube
\[
R(Q):=\inte\left(\frac{1}{4}Q\right)=\prod_{i=1}^{n} (x_i-\ell/8,\, x_i+\ell/8)
\]
whose center is $x_Q$ and side length is $\ell/4$.
The {\em $\beta$-passage} in $Q$ is the open set
\[
P_\beta(Q):= \bigg(\prod_{i=1}^{n-1} \big(x_i-(\ell/8)^{1/\beta},\,
x_i+(\ell/8)^{1/\beta} \big)\bigg) \times (x_n+\ell/8,\,
x_n+\ell/8+(\ell/8)^{1/\beta}).
\]
Note that $\ell/8< 1$ and $1/\beta\ge 1$, hence we have
$(\ell/8)^{1/\beta} < \ell/8$. Thus, $P_\beta(Q)\subset \frac{1}{2}Q$.
The open cube
\[
E(Q):=\mathrm{int}\left(\frac{3}{4}Q\right)=\prod_{i=1}^{n} (x_i-3\ell/8,\,
  x_i+3\ell/8)\subset Q
\]
contains the room and $\beta$-passage in $Q$.  The {\em long
  $\beta$-passage} in $Q$ is the open set
\[
L_\beta(Q):= \bigg(\prod_{i=1}^{n-1} \big(x_i-(\ell/8)^{1/\beta},\,
x_i+(\ell/8)^{1/\beta} \big)\bigg) \times (x_n,\, x_n+\ell/2)\subset
Q.
\]
The {\em $\beta$-apartment} in $Q$ is the set
\begin{equation*}
A_\beta(Q):= L_\beta(Q)\cup Q\setminus (\partial R(Q)\cup \partial P_\beta(Q))\subset Q.
\end{equation*}
Figure~\ref{fig:Q} depicts these geometric objects in a cube $Q$ when
$\beta=1/2$.

\begin{figure}[!htbp] 
\centering
\includegraphics[width=0.45\textwidth]{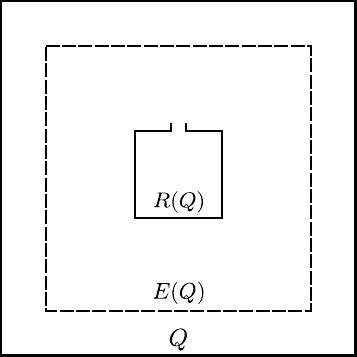}
\caption[Figure 11]{A modified Whitney cube with $\beta=1/2$.}
\label{fig:Q}
\end{figure}

\begin{defn}\label{s_version}
  Let $G$ be a John domain. A {\em $\beta$-version of $G$} is the
  domain
\[
G_\beta := \bigcup_{\substack{Q\in\mathcal{W}_G}} A_\beta(Q).
\]
\end{defn}
The following proposition is a modification of \cite[Proposition
5.11]{HH-SV}.
\begin{prop}\label{dimpres}
  Let $G\subset\R^n$ be a John domain. Then
\[\dim_\mathcal{M}(\partial G)=\dim_\mathcal{M}(\partial G_\beta)\]
for every $\beta\in (0,1]$.
\end{prop}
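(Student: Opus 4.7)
My plan is to prove the two inequalities $\dim_\mathcal{M}(\partial G) \le \dim_\mathcal{M}(\partial G_\beta)$ and its reverse separately. For the forward direction I would verify the inclusion $\partial G \subset \partial G_\beta$: since $G_\beta \subset G$, every $x \in \partial G$ satisfies $x \notin G_\beta$, while each neighborhood of $x$ contains Whitney cubes $Q \in \mathcal{W}_G$ of arbitrarily small side length, and each such cube carries an open room $R(Q) \subset A_\beta(Q) \subset G_\beta$. Hence $x \in \overline{G_\beta}$, so $x \in \partial G_\beta$, and monotonicity of the upper Minkowski dimension gives the inequality.

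For the reverse bound, I fix $\lambda$ with $\dim_\mathcal{M}(\partial G) < \lambda < n$. Because $G$ is a bounded open subset of $\R^n$, $n\ge 2$, with non-empty boundary, one has $\dim_\mathcal{M}(\partial G)\ge n-1$, so $\lambda > n-1$. My goal is to show
\[
|N_r(\partial G_\beta)| \lesssim r^{n-\lambda}
\]
for small $r>0$, where $N_r$ denotes the open $r$-neighborhood. I exploit the inclusion
\[
\partial G_\beta \subset \partial G \cup \bigcup_{Q \in \mathcal{W}_G} \bigl(\partial R(Q) \cup \partial P_\beta(Q)\bigr),
\]
which splits the task into three contributions. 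The first is $|N_r(\partial G)| \lesssim r^{n-\lambda}$ by the choice of $\lambda$.

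For the room sum $\sum_Q |N_r(\partial R(Q))|$ I group cubes by side length $2^{-j}$ and use the counting bound $\sharp\mathcal{W}_j \lesssim 2^{j\lambda}$, which follows because the cubes in $\mathcal{W}_j$ have essentially disjoint interiors and all lie in a $C2^{-j}$-neighborhood of $\partial G$. For cubes with $2^{-j} \ge r$ the per-cube contribution is $\lesssim r \cdot 2^{-j(n-1)}$, and the geometric sum (using $\lambda > n-1$) yields $\lesssim r^{n-\lambda}$; for cubes with $2^{-j} < r$, the union $\bigcup_{Q \in \mathcal{W}_j} \partial R(Q)$ already lies in $N_{Cr}(\partial G)$ and is absorbed into the first piece. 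The passage sum $\sum_Q |N_r(\partial P_\beta(Q))|$ is handled by the same strategy, but the passages have the anomalous scale $\ell(Q)^{1/\beta}$, which forces a three-case split according to whether $r < \ell(Q)^{1/\beta}$, $\ell(Q)^{1/\beta} \le r < \ell(Q)$, or $r \ge \ell(Q)$; the worst case produces a bound of the shape $r^{n-\beta\lambda}$, which is $\lesssim r^{n-\lambda}$ since $\beta \le 1$ and $r<1$.

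Combining the three contributions gives $|N_r(\partial G_\beta)| \lesssim r^{n-\lambda}$, so $\dim_\mathcal{M}(\partial G_\beta) \le \lambda$; letting $\lambda$ decrease to $\dim_\mathcal{M}(\partial G)$ completes the argument. The main obstacle is the passage sum, where the interplay between the three scales $r$, $\ell(Q)^{1/\beta}$, and $\ell(Q)$ requires careful bookkeeping in order to confirm that the narrow passages, despite a large surface-area-to-volume ratio, do not raise the Minkowski dimension beyond $\dim_\mathcal{M}(\partial G)$.
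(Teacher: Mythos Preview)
The paper does not supply its own proof of this proposition; it merely records that the statement is a modification of \cite[Proposition~5.11]{HH-SV} and moves on. Your direct Minkowski-content argument is sound and is the natural way to establish such a result. The two inclusions $\partial G\subset\partial G_\beta$ and
\[
\partial G_\beta\subset \partial G\cup\bigcup_{Q\in\mathcal W_G}\bigl(\partial R(Q)\cup\partial P_\beta(Q)\bigr)
\]
are correct (the second follows from $G\setminus G_\beta\subset\bigcup_Q(\partial R(Q)\cup\partial P_\beta(Q))$ together with $\partial G_\beta\subset\partial G\cup(G\setminus G_\beta)$), the Whitney counting bound $\sharp\mathcal W_j\lesssim 2^{j\lambda}$ for any fixed $\lambda>\dim_\mathcal{M}(\partial G)$ is exactly what drives the estimate, and your three-regime split for the passages handles the extra scale $\ell(Q)^{1/\beta}$ properly. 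The one place where the argument genuinely needs $\lambda>n-1$, namely the geometric room sum $\sum_j 2^{j(\lambda-n+1)}$, is covered by your observation that $\dim_\mathcal{M}(\partial G)\ge n-1$ for any bounded domain. In short, your proof is complete; it is presumably close in spirit to what appears in \cite{HH-SV}, but since the present paper gives no details there is nothing further to compare.
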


Let us next study the validity of Poincar\'e inequalities on a
$\beta$-version of a given John domain. Let $Q\subset \R^n$ be a
closed cube, $\ell(Q)\le 4$, and define a continuous function
\[
u^{A_\beta(Q)}\colon G_\beta\to \R
\]
which has linear decay along the $n^{\text{\tiny th}}$  variable in $P_\beta(Q)$
and satisfies
\begin{equation}\label{values}
u^{A_\beta(Q)}(x) =\begin{cases}
\ell(Q)^{(\lambda-n)/q},\qquad &\text{ if }x\in R(Q);\\
0,\qquad &\text{ if } x\in G_\beta\setminus (R(Q)\cup \overline{P_\beta(Q)}).
\end{cases}
\end{equation}
Moreover, in the sense of distributions in $G_\beta$ the following
holds
\begin{equation}\label{gradient}
\nabla u^{A_\beta(Q)} = (0,\ldots,0,-8^{1/\beta}\ell(Q)^{(\lambda-n)/q-1/\beta}\chi_{P_\beta(Q)}).
\end{equation}

The following is the first main result in this section.

\begin{thm}\label{sharp_counter}
  Let $G\subset\R^n$ be a John domain such that
\begin{equation} \label{eq:sharp-counter}
\limsup_{k\to \infty} 2^{-\lambda k}\sharp \mathcal{W}_k>0,
\end{equation}
where $\lambda=\mathrm{dim}_{\mathcal{M}}(\partial G)$. Suppose that
\[
p\le \frac{q(n-\lambda b)}{q+b(n-\lambda)},\qquad b=\frac{2\beta}{1+\beta},
\]
where $1\le q<p<\infty$ and $\beta\in (0,1]$.  Then the
$\beta$-version of $G$ does not satisfy the $(q,p)$-Poincar\'e
inequality \eqref{poincare}.
\end{thm}

By \cite[Proposition 5.2]{HH-SV} for every $\lambda\in[n-1,n)$, $n\geq
2$, there exists a John domain $G\subset\R^n$ such that
$\mathrm{dim}_{\mathcal{M}}(\partial G)=\lambda$ and
hypothesis~\eqref{eq:sharp-counter} in Theorem~\ref{sharp_counter} is
satisfied.

\begin{proof}[Proof of Theorem~\ref{sharp_counter}] 
By the assumptions and inequality $\beta \le b\le 1$, 
\begin{equation}\label{rels}
p\le \frac{q(n-\lambda\beta)}{q+\beta(n-\lambda)}\,.
\end{equation} 
  Choose $k_0\in\N$ such that $\limsup_{k\to \infty}
  2^{-\lambda(k-k_0)}\sharp \mathcal{W}_k>2$. This allows us to
  inductively choose indices $j(k)$, $k\in \N$, such that
\[
k_0\le j(1)<j(2)<\dotsb\] and $\sharp\mathcal{W}_{j(k)} \ge 2\cdot
2^{\lambda (j(k)-k_0)}$ for every $k\in \N$.  Let us write
$M_j:=2^{[\lambda(j-k_0)]}$, where $[\lambda(j-k_0)]$ is the integer
part of $\lambda(j-k_0)$, and let us choose cubes
$Q_{j(k)}^1,\ldots,Q_{j(k)}^{2M_{j(k)}}\in \W_{j(k)}$.  For every
$m\in\N$ let us write
\[
v_m := \sum_{k=1}^m \bigg(\sum_{i=1}^{M_{j(k)}}
u^{A_\beta(Q_{j(k)}^i)}-\sum_{i=M_{j(k)}+1}^{2M_{j(k)}}
u^{A_\beta(Q_{j(k)}^i)}\bigg)\in W^{1,p}(G_\beta).
\]
Note that $(v_m)_{G_\beta}=0$ and
\begin{align*}
A_m:&=\bigg(\int_{G_\beta} |v_m - (v_m)_{G_\beta}|^q\bigg)^{1/q}
= \bigg(\sum_{k=1}^m\sum_{i=1}^{2M_{j(k)}} \int_{G_\beta}|u^{A_\beta(Q_{j(k)}^i)}(x)|^q\,dx\bigg)^{1/q}
\\&\ge 
\bigg(\sum_{k=1}^m 2\cdot 2^{\lambda(j(k)-k_0)-1}2^{-j(k)(\lambda-n)}4^{-n}2^{-j(k)n}\bigg)^{1/q} = 
c_{n,q,\lambda,k_0}m^{1/q}.
\end{align*}
On the other hand, by inequality \eqref{rels},
\begin{align*}
B_m: &=\bigg( \int_{G_\beta} |\nabla v_m(x)|^p\,dx\bigg)^{1/p} \\&=  \bigg(\sum_{k=1}^m\sum_{i=1}^{2M_{j(k)}} \int_{G_\beta}|\nabla u^{A_\beta(Q_{j(k)}^i)}(x)|^p\,dx\bigg)^{1/p}
\\&
\le c_\beta\bigg(\sum_{k=1}^m 2\cdot 2^{\lambda(j(k)-k_0)}2^{-pj(k)((\lambda-n)/q-1/\beta)}2^{n-1}2^{-nj(k)/\beta}\bigg)^{1/p}
\\& \le c_{n,\beta,p,\lambda,k_0}m^{1/p}.
\end{align*}
Hence, we obtain 
\[
\frac{A_m}{B_m} \ge c_{n,\beta,p,q,k_0,\lambda}m^{1/q-1/p}\xrightarrow{m\to \infty} \infty,
\]
where $1\le q<p$.
It follows that $G_\beta$ does not satisfy the $(q,p)$-Poincar\'e inequality
\eqref{poincare}.
\end{proof}

We shall show that a $\beta$-version of a given John domain $G$
satisfies a $\beta/4$-quasihyperbolic boundary condition if
$\beta\le\varkappa c_J$, where $c_j$ is the John constant of $G$ and
$\varkappa$ is a constant depending on $n$ only,
Theorem~\ref{thm:modification}.  Let us begin with the following
auxiliary result.

\begin{lem}\label{mod_lemma}
  Let $G\subset\R^n$ be a John domain with the John constant $c_J$.
  Let further $\beta\in (0,1)$ and $G_\beta$ be the $\beta$-version of
  $G$. If $x\in E(Q)\cap G_\beta$, $Q\in\mathcal{W}_G$, then there is
  a point $z\in \partial E(Q)$ such that
\begin{equation}\label{xz}
  k_{G_\beta}(x,z)\le 
  \bigg(\frac{2}{\beta}+\frac{1}{\varkappa} \bigg) \log \frac{1}{\dist(x,\partial G_\beta)} + c.
\end{equation}
If $x\in Q\setminus E(Q)$, then
\begin{equation}\label{xx0}
k_{G_\beta}(x,x_0)\le \frac{1}{\varkappa c_J}\log \frac{1}{\dist(x,\partial G_\beta)} + c.
\end{equation}
The constant $\varkappa\in(0,1)$ appearing in both inequalities~\eqref{xz} and \eqref{xx0}
depends on $n$ only.
\end{lem}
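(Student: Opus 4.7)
The plan is to prove the two inequalities by separate arguments: the first by local analysis inside a single apartment $A_\beta(Q)$, the second by a modification of the John curve of $G$ into a curve in $G_\beta$.

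For \eqref{xz}, I split $E(Q)\cap G_\beta$ according to whether $x$ lies in the room $R(Q)$, in the passage $P_\beta(Q)$, or in the exterior part $E(Q)\setminus(\overline{R(Q)}\cup\overline{P_\beta(Q)})$. In the exterior case the region surrounding $x$ is topologically a cube with two small nested removed pieces, and a standard Whitney chaining argument inside $A_\beta(Q)$ furnishes a point $z\in\partial E(Q)$ with $k_{G_\beta}(x,z)\le (1/\varkappa)\log(1/\dist(x,\partial G_\beta))+c$ for a dimensional constant $\varkappa\in(0,1)$. For $x\in R(Q)$, I fix $z$ to be the midpoint of the top face of $\partial E(Q)$, which sits on the axis of $L_\beta(Q)$ above $P_\beta(Q)$, and construct the path from $x$ to $z$ in three parts: (i) a quasihyperbolic traverse from $x$ to a point $y$ just beneath the hole $\partial R(Q)\cap L_\beta(Q)$, estimated by the John inequality for $R(Q)$ regarded as a cube with a small opening on one face, giving $\lesssim \log(1/\dist(x,\partial G_\beta))+(1/\beta)\log(1/\ell(Q))+c$ since $\dist(y,\partial R(Q)\setminus L_\beta(Q))\sim(\ell(Q)/8)^{1/\beta}$; (ii) an $O(1)$ transition from $y$ through the hole and across the nearly-cubic $P_\beta(Q)$ of side $(\ell(Q)/8)^{1/\beta}$ into $L_\beta(Q)$ above $P_\beta(Q)$; (iii) a vertical ascent along the axis of $L_\beta(Q)$ to $z$, governed by the integral $\int ds/\max(s,(\ell(Q)/8)^{1/\beta})\sim(1/\beta-1)\log(1/\ell(Q))+c$. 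Since the inequality $\ell(Q)\gtrsim \dist(x,\partial G_\beta)$ holds throughout the apartment, stages (i) and (iii) combine to at most $(2/\beta)\log(1/\dist(x,\partial G_\beta))+c$; the case $x\in P_\beta(Q)$ is analogous, starting at stage (ii), with bound $\lesssim \log(1/\dist(x,\partial G_\beta))+c$. Together the three cases yield $(2/\beta+1/\varkappa)\log(1/\dist(x,\partial G_\beta))+c$.

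For \eqref{xx0}, let $\gamma\colon[0,L]\to G$ be a John curve from $x$ to $x_0$ with $\dist(\gamma(t),\partial G)\ge c_J t$. I construct $\tilde\gamma$ in $G_\beta$ by rerouting the portion of $\gamma$ inside every Whitney cube $Q'$ it intersects so that it stays in the outer shell $Q'\setminus E(Q')$. Because by construction $\overline{R(Q')\cup P_\beta(Q')}\subset E(Q')$, the rerouting encounters no wall of $A_\beta(Q')$ and satisfies $\dist(\cdot,\partial G_\beta)\sim\ell(Q')$ uniformly. A direct geometric construction inside each cube shows that $\tilde\gamma$ can be made rectifiable with arc length at most $C_n$ times that of $\gamma$, for some dimensional constant $C_n$. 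Then
\[
\dist(\tilde\gamma(t),\partial G_\beta)\gtrsim\ell(Q'_{s(t)})\gtrsim\dist(\gamma(s(t)),\partial G)\ge c_J s(t)\ge(c_J/C_n)t,
\]
where $s(t)$ is the reparametrisation, i.e.\ $\tilde\gamma$ is a John curve in $G_\beta$ with constant $\varkappa c_J$ for some $\varkappa\in(0,1)$ depending only on $n$. Integrating $1/\dist(\tilde\gamma(t),\partial G_\beta)$ along $\tilde\gamma$ and using $\diam(G)\le 4$ yields the stated bound $k_{G_\beta}(x,x_0)\le(1/(\varkappa c_J))\log(1/\dist(x,\partial G_\beta))+c$.

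The hard part will be the explicit rerouting construction in the proof of \eqref{xx0}: one must verify that the piecewise-modified curve is rectifiable, stays in $G_\beta$ throughout, transitions cleanly across the common faces of neighbouring Whitney cubes, and simultaneously retains both the John-type lower bound on $\dist(\tilde\gamma(t),\partial G_\beta)$ and the proportional upper bound on total arc length. Inside a single cube this reduces to a scale-invariant combinatorial-geometric problem of skirting two nested smaller boxes while staying in the outer shell, elementary in itself but requiring uniformity in the cube size before concatenation over all traversed cubes produces the global curve.
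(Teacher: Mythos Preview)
Your proposal is correct and follows essentially the same approach as the paper. For \eqref{xz} the paper also treats the three sub-cases $x\in R(Q)$, $x\in\overline{P_\beta(Q)}$, and $x\in E(Q)\setminus(R(Q)\cup\overline{P_\beta(Q)})$, choosing the same exit point $z=x_Q+(3\ell/8)e_n$ on the top face and decomposing the axial traverse into the same three stages (approach to the passage, crossing, ascent); the only cosmetic difference is that the paper routes the initial leg through the centre $x_Q$ explicitly (which is where its $1/\varkappa$ term arises in the room case), whereas you fold this into your stage~(i). For \eqref{xx0} the paper likewise reroutes the John curve of $G$ so as to skirt each $E(Q')$ and obtains the carrot condition $\dist(\tilde\gamma(t),\partial G_\beta)\ge \varkappa c_J t$, deferring the detailed cube-by-cube construction to \cite[Proposition~5.16]{HH-SV}; your sketch of that construction and your identification of it as the delicate step are accurate.
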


\begin{proof}
  Let us consider the case $x\in R(Q)\subset E(Q)$.  We will join $x$
  to the point $z:=x_Q+(3\ell/8) e_n\in \partial E(Q)$, where $e_n=(0,\ldots,0,1)$, by
  a rectifiable curve that is to be constructed next.
   For this
  purpose, we record the following inequality
\begin{equation}\label{x_est}
\dist(x,\partial G_\beta)\le \ell/8.
\end{equation}
Notice that there is a constant $\varkappa$ such that
\[
k_{G_\beta}(x,x_Q)\le \frac{1}{\varkappa} \log \frac{1}{\dist(x,\partial G_\beta)} +c.
\]
Next we connect $x_Q$ to the point $z$ by a curve
$\gamma:[0,3\ell/8]\to G_\beta$ which parametrizes the line segment
$[x_Q,z]$.  Write $\xi=\ell/8-(\ell/8)^{1/\beta}$ and $\eta = \ell/8+
2(\ell/8)^{1/\beta}$. Observe that
\[
\dist(\gamma(t),\partial G_\beta)\ge \ell/8 - t
\]
for all $t\in [0,\ell/8]$. By this inequality and \eqref{x_est}, we
obtain
\begin{align*}
\int_0^{\xi}\frac{dt}{\dist(\gamma(t),\partial G_\beta)}
\le \int_0^\xi \frac{dt}{\ell/8-t} 
&\le  \frac{1}{\beta} \log \frac{1}{\dist(x,\partial G_\beta)}.
\end{align*}
In the following step, we pass through the $\beta$-passage in $Q$. If
$t\in [\xi,\eta]$ then $\dist(\gamma(t),\partial G_\beta) \ge
(\ell/8)^{1/\beta}$. Hence
\begin{align*}
\int_\xi^{\eta} \frac{dt}{\dist(\gamma(t),\partial G_\beta)}\le 
\frac{3(\ell/8)^{1/\beta}}{(\ell/8)^{1/\beta}}=3.
\end{align*}
For $t\in [\eta,3\ell/8]$, 
\[
\dist(\gamma(t),\partial G_\beta)\ge \min\{\ell/8,(\ell/8)^{1/\beta}+
t - \eta\}.
\]
By this inequality and the fact that $({3\ell/8-\eta})/({\ell/8})\le 3$,
\begin{align*}
\int_{\eta}^{3\ell/8} \frac{dt}{\dist(\gamma(t),\partial G_\beta)} &\le 3+\int_\eta^{3\ell/8} \frac{dt}{(\ell/8)^{1/\beta}+  t - \eta}
\\&
\le \frac{1}{\beta} \log \frac{1}{\ell/8} + c
\le \frac{1}{\beta} \log\frac{1}{\dist(x,\partial G_\beta)} + c.
\end{align*}
By these estimates we have 
\[
k_{G_\beta} ( x,z) \le k_{G_\beta}(x,x_Q)+k_{G_\beta}(x_Q,z)\le \bigg(\frac{2}{\beta}+\frac{1}{\varkappa}\bigg) 
\log\frac{1}{\dist(x,\partial G_\beta)} + c
\]
for every $x\in R(Q)$. This gives inequality~\eqref{xz}.

Let us consider the case $x\in G_\beta\cap
\overline{P_\beta(Q)}$. There is a point $\omega$ on the line segment
from $x_Q$ to $z$ such that
\[
k_{G_\beta}(x,\omega)\le \frac{1}{\varkappa}\log\frac{1}{\dist(x,\partial G_\beta)} + c.
\]
Joining $\omega$ to $z$ by the line segment $[\omega,z]\subset
[x_Q,z]$ gives the inequality
\[
k_{G_\beta}(\omega,z)
\le \frac{1}{\beta} \log \frac{1}{\ell/8}+c.
\]
Since $\dist(x,\partial G_\beta)\le (\ell/8)^{1/\beta}\le \ell/8$, we
have
\[
k_{G_\beta}(x,z)\le k_{G_\beta}(x,\omega)+k_{G_\beta}(\omega,z)\le
\bigg(\frac{1}{\beta}+\frac{1}{\varkappa}\bigg) \log\frac{1}{\dist(x,\partial G_\beta)} + c.
\]
This gives inequality~\eqref{xz}.

If $x\in E(Q)\setminus (R(Q)\cup \overline{P_\beta(Q)})$, then clearly
there is a point $z\in \partial E(Q)$ such that
\[
k_{G_\beta}(x,z)\le \frac{1}{\varkappa}\log\frac{1}{\dist(x,\partial G_\beta)} + c.
\]
But this is inequality~\eqref{xz}.

Finally, let us consider the case $x\in Q\setminus E(Q)$.  The idea is
to construct a curve $\gamma:[0,\ell(\gamma)]\to G_\beta$,
parametrized by arc length such that $\gamma(0)=x$ and
$\gamma(\ell(\gamma))=x_0$, so that
\begin{equation}\label{des}
\dist(\gamma(t),\partial G_\beta) \ge \varkappa c_J t
\end{equation}
for every $t\in [0,\ell(\gamma)]$. This is done by taking a John curve
from $x$ to the John center $x_0$, and modifying it whenever it
intersects with $E(Q)$, $Q\in\mathcal{W}_G$. This is illustrated in
Figure~\ref{fig:Curve}.  For further details, we refer to the proof of
\cite[Proposition 5.16]{HH-SV}.

Let us write $\delta=\dist(x,\partial G_\beta)$. If $\ell(\gamma)\le
\delta/2$, then $k_{G_\beta}(x,x_0)\le 1$. Hence, we may assume that
$\ell(\gamma)>\delta/2$ and therefore
\begin{align*}
  k_{G_\beta} (x,x_0) &\le \int_{\gamma}\frac{ds}{\dist(z,\partial G_\beta)}\\
  &=\int_0^{\delta/2} \frac{dt}{\dist(\gamma(t),\partial G_\beta)}+
  \int_{\delta/2}^{\ell(\gamma)} \frac{dt}{\dist(\gamma(t),\partial
    G_\beta)}.
\end{align*}
If $t\in [0,\delta/2]$ then $\dist(\gamma(t),\partial G_\beta)\ge
\delta/2$. Inequality~\eqref{des} implies that $\ell(\gamma)\le
\diam(G_\beta)/\varkappa c_J=:T$. Hence,
\begin{align*}
k_{G_\beta}(x,x_0)\le 1+ \frac{1}{\varkappa c_J}\int_{\delta/2}^{T} \frac{dt}{t}
&\le 1+\frac{\log(T)-\log(\delta/2)}{\varkappa c_J}\\
&= \frac{1}{\varkappa c_J} \log\frac{1}{\dist(x,\partial G_\beta)} + c.
\end{align*}
This gives inequality~\eqref{xx0}.
\end{proof}

\begin{figure}[!htbp]
\centering
\includegraphics[width=0.5\textwidth]{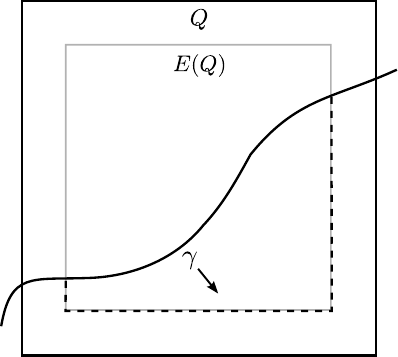}
\caption{John curve $\gamma$ and its modification (dotted line) on
  $E(Q)$.}
 \label{fig:Curve}
\end{figure}

The following is the second main result in this section.

\begin{thm}\label{thm:modification}
Suppose $G\subset\R^n$ is a John domain with a John constant $c_J$.
  Then the $\beta$-version of $G$, $G_\beta$, satisfies a
  $\beta/4$-quasihyperbolic boundary condition if $0<\beta \le
  \varkappa c_J$, where $\varkappa\in (0,1)$ is the same constant as
  in Lemma~\ref{mod_lemma}.
\end{thm}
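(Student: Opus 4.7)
The plan is to deduce the $\beta/4$-quasihyperbolic boundary condition directly from Lemma~\ref{mod_lemma} by a short case analysis, letting the distinguished point be the John center $x_0$ of the original domain $G$. Fix an arbitrary $x \in G_\beta$. Since the Whitney cubes of $G$ cover $G \supset G_\beta$, I can pick $Q \in \mathcal{W}_G$ with $x \in Q$, and then split according to whether $x$ sits in the outer shell $Q\setminus E(Q)$ or in the inner cube $E(Q)$.

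In the first case $x \in Q \setminus E(Q)$, inequality \eqref{xx0} of Lemma~\ref{mod_lemma} immediately gives
\[
k_{G_\beta}(x,x_0) \le \frac{1}{\varkappa c_J}\log\frac{1}{\dist(x,\partial G_\beta)} + c,
\]
and the hypothesis $\beta \le \varkappa c_J$ turns the coefficient $1/(\varkappa c_J)$ into at most $1/\beta \le 4/\beta$. In the second case $x \in E(Q)\cap G_\beta$, inequality \eqref{xz} produces a point $z \in \partial E(Q) \subset Q \setminus E(Q)$ with
\[
k_{G_\beta}(x,z)\le\Bigl(\frac{2}{\beta}+\frac{1}{\varkappa}\Bigr)\log\frac{1}{\dist(x,\partial G_\beta)} + c,
\]
and then I apply \eqref{xx0} to $z$ to reach $x_0$.

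To combine the two estimates in the second case I need $\log(1/\dist(z,\partial G_\beta))$ to be controlled by $\log(1/\dist(x,\partial G_\beta))$. This follows from two easy geometric observations: first, $z = x_Q + (3\ell(Q)/8)e_n$ is separated from $\partial G_\beta$ by a distance comparable to $\ell(Q)$, because it lies inside the corridor $Q\setminus(\overline{R(Q)}\cup\overline{P_\beta(Q)})$ and the Whitney property gives $\dist(Q,\partial G) \ge \diam(Q)$; second, since $x\in A_\beta(Q)\subset Q$ and every cross-section of the room/passage in $Q$ has diameter $\lesssim \ell(Q)$, we have $\dist(x,\partial G_\beta) \lesssim \ell(Q)$. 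Hence $\log(1/\dist(z,\partial G_\beta)) \le \log(1/\dist(x,\partial G_\beta)) + c$.

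Putting the two pieces together in the second case yields
\[
k_{G_\beta}(x,x_0) \le \Bigl(\frac{2}{\beta}+\frac{1}{\varkappa}+\frac{1}{\varkappa c_J}\Bigr) \log\frac{1}{\dist(x,\partial G_\beta)} + c,
\]
and since $c_J\le 1$ we have $1/\varkappa \le 1/(\varkappa c_J)$, so the bracket is at most $2/\beta + 2/(\varkappa c_J) \le 4/\beta$ under the standing assumption $\beta \le \varkappa c_J$. This is exactly the $\beta/4$-quasihyperbolic boundary condition. The only mildly delicate step is the distance comparison at $z$, but it is essentially built into the geometry of the construction of $A_\beta(Q)$; no obstacle remains since Lemma~\ref{mod_lemma} has already absorbed the chain-building work.
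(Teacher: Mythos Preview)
Your proof is correct and follows essentially the same approach as the paper: a two-case analysis on whether $x\in Q\setminus E(Q)$ or $x\in E(Q)\cap G_\beta$, invoking \eqref{xx0} directly in the first case and combining \eqref{xz} with \eqref{xx0} (applied at $z$) in the second, using $\dist(z,\partial G_\beta)\gtrsim \ell(Q)\gtrsim \dist(x,\partial G_\beta)$ to merge the two logarithms. One small imprecision: Lemma~\ref{mod_lemma} only promises \emph{some} $z\in\partial E(Q)$, not the specific point $x_Q+(3\ell/8)e_n$ (that particular choice appears only in two of the three sub-cases of the lemma's proof); however, your distance estimate $\dist(z,\partial G_\beta)\gtrsim \ell(Q)$ holds for any $z\in\partial E(Q)$, so the argument is unaffected.
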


\begin{proof}
Fix a point $x$ in $G_{\beta}\subset G$.  We shall show that the inequality
\begin{equation}\label{wanted}
k_{G_{\beta}}(x,x_0) \leq
\frac{4}{\beta}\log\frac{1}{\dist(x,\partial G_{\beta})} + c
\end{equation}
is valid with some constant $c$; the point $x$ belongs to some Whitney
cube $Q\in\mathcal{W}_G$.

Let us consider the case $x\in E(Q)\cap G_\beta$. By Lemma~\ref{mod_lemma},
there is a point $z\in \partial E(Q)$ such that inequality~\eqref{xz}
holds. Applying Lemma~\ref{mod_lemma} again, now with the point $z\in
Q\setminus E(Q)$, yields
\begin{align*}
  k_{G_\beta}(x,x_0)&\le k_{G_\beta}(x,z)+k_{G_\beta}(z,x_0) \\&\le
  \bigg(\frac{2}{\beta}+\frac{1}{\varkappa}\bigg)\log
  \frac{1}{\dist(x,\partial G_\beta)} + \frac{1}{\varkappa c_J} \log
  \frac{1}{\dist(z,\partial G_\beta)} +c.
\end{align*}
By inequality $\dist(z,\partial G_\beta)\ge \ell/8\ge \dist(x,\partial G_\beta)/c_n$,
\[
k_{G_\beta}(x,x_0)\le \frac{4}{\beta}\log \frac{1}{\dist(x,\partial G_\beta)}+c.
\]
This gives inequality~\eqref{wanted}.

If $x\in Q\setminus E(Q)$, then inequality \eqref{wanted} follows from
Lemma~\ref{mod_lemma}.
\end{proof}

\section{Sharpness of Theorem \ref{sharp} in the plane}
\label{sharp_plane}

We consider sharpness of our main result, Theorem \ref{sharp}, in the
plane. We introduce new constants $\tau$ and $\bar c_2$ whose values
will be clear later. It will become apparent that we would like to
have the constant $\bar c_2$ as close as possible to the constant
$c_2$ in Theorem~\ref{KR_theorem}. The following theorem is a
delicate extension of Theorem~\ref{sharp_counter} in the plane.

\begin{thm}\label{sharp_counter_plane}
  Let $\lambda\in [1,2)$ and let $\beta\in (0,1)$ be such that
  $\lambda \le 2-{\bar c_2}\, \beta$. Then there is a domain
  $G_\beta\subset\R^2$ which satisfies a $\beta/4$-quasihyperbolic
  boundary condition, $\dim_{\mathcal{M}}(\partial G_\beta)=\lambda$,
  and which does not satisfy the $(q,p)$-Poincar\'e inequality
  \eqref{poincare} if $1\le q<p<\infty$, and 
\[
p\le \frac{q(2-\lambda b)}{q+b(2-\lambda)},\qquad
b=\frac{2\beta}{1+\beta}.
\]
\end{thm}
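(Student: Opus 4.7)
The plan is to realize $G_\beta$ as the $\beta$-version (Definition~\ref{s_version}) of a carefully chosen bounded planar John domain $G$. Granting such a $G$, the three conclusions of the theorem are immediate: Theorem~\ref{thm:modification} supplies the $\beta/4$-quasihyperbolic boundary condition, Proposition~\ref{dimpres} transfers the upper Minkowski dimension from $G$ to $G_\beta$, and Theorem~\ref{sharp_counter}, whose hypothesis is a property of $G$ rather than of $G_\beta$, delivers the failure of the $(q,p)$-Poincar\'e inequality throughout the prescribed range of $(q,p)$. The entire proof therefore reduces to constructing, for each admissible pair $(\lambda,\beta)$, a bounded John domain $G\subset\R^2$ satisfying three properties simultaneously: \emph{(i)} $\dim_\M(\partial G)=\lambda$; \emph{(ii)} the Whitney lower bound $\limsup_{k\to\infty}2^{-\lambda k}\sharp\W_k>0$; and \emph{(iii)} John constant $c_J\ge \beta/\varkappa$, where $\varkappa\in(0,1)$ is the constant of Lemma~\ref{mod_lemma}.

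A natural model is a planar ``comb'' or ``teeth'' domain obtained from a square by removing, at each dyadic scale $k\in\N$, a disjoint family of approximately $2^{\tau k}$ narrow vertical slits of length comparable to $2^{-k}$, with the John center placed in the complementary strip that sees all of these slits. A standard Whitney-cube count then gives $\sharp\W_k\asymp 2^{\tau k}$ and $\dim_\M(\partial G)=\tau$, so the choice $\tau=\lambda$ secures \emph{(i)} and \emph{(ii)}. The John constant is governed by the aspect ratio of each slit: in order to guarantee $c_J\ge \beta/\varkappa$ in \emph{(iii)}, the width of a slit of length $2^{-k}$ must be at least a fixed multiple of $\beta\cdot 2^{-k}$. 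The combinatorial constraint that $2^{\tau k}$ such slits fit disjointly inside the square, exploited both horizontally and vertically, yields a bound of the form $\tau\le 2-\bar c_2\beta$ with an explicit constant $\bar c_2$; this is precisely the admissibility hypothesis $\lambda\le 2-\bar c_2\beta$ in the theorem.

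The main obstacle is the quantitative John-constant estimate: the positions and widths of the slits must be designed so that every point of $G$ is joined to the center by a rectifiable curve satisfying the John condition \eqref{eq:John} with $c_J$ depending \emph{linearly} on $\beta$, while at the same time the density of slits hits the prescribed Minkowski-dimension lower bound $\sharp\W_k\gtrsim 2^{\lambda k}$. Once such a $G$ is produced, Theorem~\ref{thm:modification} applies (the hypothesis $\beta\le \varkappa c_J$ holds by construction), Proposition~\ref{dimpres} preserves the dimension, and Theorem~\ref{sharp_counter} finishes the argument. Optimizing the slit packing brings $\bar c_2$ as close as possible to the constant $c_2$ appearing in Theorem~\ref{KR_theorem}, in line with the sharpness discussed in Remark~\ref{rmk:sharpness1}.
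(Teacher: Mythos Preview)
Your reduction is exactly the paper's: once a bounded John domain $G\subset\R^2$ with $\dim_\M(\partial G)=\lambda$, the Whitney lower bound $\limsup_k 2^{-\lambda k}\sharp\W_k>0$, and John constant $c_J\ge\beta/\varkappa$ is in hand, Theorem~\ref{thm:modification}, Proposition~\ref{dimpres}, and Theorem~\ref{sharp_counter} finish the argument precisely as you say. Where you diverge is in the construction of $G$. You sketch a multi-scale comb, deleting at each scale $k$ roughly $2^{\lambda k}$ slits of length $\sim 2^{-k}$ (a small correction: it is the spacing between adjacent slits relative to their length, not the slit width itself, that governs $c_J$), with the packing constraint forcing $\lambda\le 2-\bar c_2\beta$. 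The paper instead takes $G=B^2(0,2)\setminus K_\lambda$, where $K_\lambda\subset[-1,1]^2$ is the self-similar attractor of four corner similitudes of ratio $r(\kappa)=(1-\kappa)/2$ (Proposition~\ref{olemassa}). The similarity-dimension formula $\dim_\M(K_\lambda)=-\log 4/\log r(\kappa)$ realizes every $\lambda\in[1,2)$ from a single parameter $\kappa\in(0,1/2]$; John curves are threaded along the iterated coordinate crosses $X^j$ of the construction, giving $c_J\ge c\kappa$; and a one-line mean-value estimate converts $\lambda\le 2-\tau\beta$ into $\kappa\gtrsim\beta$. The self-similar route reads all three properties of $G$ off a single parameter and obtains the dimension and Whitney count for free from standard IFS theory, whereas your comb makes the John constant transparent but shifts the labour into the multi-scale packing and the dimension verification; either approach should succeed.
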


\begin{rem}\label{dimension}
  A counterpart of Theorem~\ref{sharp_counter_plane} for $n\ge 3$ is
  not known to us. It seems that the construction behind
  Theorem~\ref{sharp_counter_plane} can be generalized only if
  $\lambda \le n-\bar c_n\,\beta$. However, Theorem~\ref{KR_theorem}
  suggests that it is natural to consider the less restrictive
  condition, $\lambda \le n- \bar c_n\,\beta^{n-1}$, which allows
  larger values of $\beta$, i.e. more cases to be covered by a
  counterexample, for a a given $\lambda$. An obstacle is that it
  seems not to be known whether Theorem~\ref{KR_theorem} is sharp 
  in case of
  $n\ge 3$.
\end{rem}

We need the following construction for the proof of
Theorem~\ref{sharp_counter_plane}.

\begin{prop}\label{olemassa}
  Let $\lambda\in [1,2)$ and $\beta\in (0,1)$ be such that $\lambda
  \le 2-\tau \beta$. Then there exists a John domain $G\subset\R^2$
  with a John constant $c_J\ge \beta$ such that
  $\dim_\mathcal{M}(\partial G)=\lambda$ and
\begin{equation}\label{paljon}
  \limsup_{k\to \infty} 2^{-\lambda k}\sharp \mathcal{W}_k
  >0.
\end{equation}
\end{prop}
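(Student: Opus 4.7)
The plan is to build $G\subset \R^2$ as an iterative, self-similar planar domain with a tree-like skeleton. Begin with a unit square $Q_0=[0,1]^2$ as the ``root''. Having constructed the level-$k$ branches, each level-$(k+1)$ branch will be a rescaled copy of the parent's branching pattern by factor $1/2$. I choose a branching factor $N = \lfloor 2^\lambda\rfloor$, so that the number of level-$k$ branches is comparable to $2^{\lambda k}$, and each branch at level $k$ will be an axis-aligned rectangle of long side $\approx 2^{-k}$ and short side $\approx c\beta\cdot 2^{-k}$, attached to its parent along one of the parent's short sides and laterally spaced inside the parent so that the $N$ children are pairwise disjoint.

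The fitting constraint says that a parent's long side of length $2^{-k}$ must accommodate $N$ children of short side $c\beta\cdot 2^{-k-1}$; this yields $Nc\beta/2 \le 1$, i.e., $N\le 2/(c\beta)$, which translates, after absorbing the universal constant into $\tau$, exactly into the hypothesis $\lambda\le 2-\tau\beta$. This is where the dimensional constraint enters, and it is sharp in the sense that a larger $N$ would force children to overlap or exit the parent.

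Three verifications follow. First, for \eqref{paljon}: every level-$k$ branch contains at least one Whitney cube of side $\approx 2^{-k}$, and there are $\gtrsim 2^{\lambda k}$ such branches, so $\sharp \W_k \gtrsim 2^{\lambda k}$. Second, for $\dim_\M(\partial G)=\lambda$: the boundary admits a cover by $\lesssim 2^{\lambda k}$ balls of radius $\approx 2^{-k}$ derived from the branching pattern, giving $\dim_\M(\partial G)\le \lambda$; the lower bound is a standard consequence of the Whitney cube count in the first verification. Third, for the John condition with $c_J\ge \beta$: take the John center $x_0$ to be the centre of $Q_0$; for $x\in G$ inside a level-$k$ branch $B_k$, construct the John curve by first moving to the central axis of $B_k$, then tracing the unique descending path $B_k\to B_{k-1}\to\dotsb\to Q_0$ along the branch axes. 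The distance to $\partial G$ at arc length $t$ from $x$ along this curve stays at least $c\beta\cdot t$ by a telescoping argument that controls the width at each level against the cumulative arc length traversed to reach that level.

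The main obstacle will be making the self-similar construction precise enough to verify the John condition on the ``attachment regions'' where branches of consecutive levels meet, and tracking all universal constants so that the constant $\tau$ can be chosen uniformly in $\lambda$ and $\beta$. In particular, one must ensure that the corridors running along the branch axes connect smoothly across scales, without introducing narrow passages of width smaller than $c\beta\cdot 2^{-k}$ at level $k$; this will force a careful geometric choice of where each child is attached to its parent, most likely at equispaced lateral positions along one edge of the parent with a small buffer to separate siblings.
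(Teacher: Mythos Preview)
Your tree construction has a genuine gap in the dimension/count matching. With branching factor $N=\lfloor 2^\lambda\rfloor$ and a fixed contraction ratio $1/2$ at every level, the number of level-$k$ branches is exactly $N^k$, and $N^k$ is \emph{not} comparable to $2^{\lambda k}$ unless $2^\lambda$ happens to be an integer. For a generic $\lambda\in[1,2)$ you have $\log_2 N<\lambda$, so $2^{-\lambda k}N^k\to 0$; thus \eqref{paljon} fails, and the upper Minkowski dimension of the boundary of your tree is $\log_2 N$, not $\lambda$. The only values your construction actually hits are $\lambda=1$ ($N=2$) and $\lambda=\log_2 3$ ($N=3$). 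The sentence ``so that the number of level-$k$ branches is comparable to $2^{\lambda k}$'' is simply false as written.

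The fix is to let a continuous geometric parameter, not an integer branching factor, carry the dimension. The paper does this by taking $G=B^2(0,2)\setminus K_\lambda$, where $K_\lambda$ is the attractor of four corner similitudes of $[-1,1]^2$ with common ratio $r(\kappa)=(1-\kappa)/2$; the open set condition gives $\dim_{\mathcal M}(K_\lambda)=-\log 4/\log r(\kappa)$, which sweeps all of $[1,2)$ as $\kappa$ ranges over $(0,1/2]$. The gap width $\kappa$ at each scale becomes the width of the corridors along the iterated coordinate axes, so the John constant satisfies $c_J\ge c\kappa$, and an elementary mean-value computation turns the hypothesis $\lambda\le 2-\tau\beta$ into $\beta<c\kappa\le c_J$. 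Your tree idea can be salvaged along the same lines---fix $N$ and tune the contraction ratio $r$ so that $N r^{\lambda}=1$---but then the fitting constraint and the John-constant estimate must be redone with $r$ in place of $1/2$, and you should check that the resulting relation between $\beta$, $\lambda$, and the corridor width still collapses to an inequality of the form $\lambda\le 2-\tau\beta$ with $\tau$ independent of $\lambda$.
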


\begin{proof}
  Let $Q:=[-1,1]\times [-1,1]\subset \R^2$ and, for $\kappa\in (0,1)$,
  $r(\kappa):=(1-\kappa)/2\in (0,1/2)$.  Let us write
  $z_1:=(\kappa+r(\kappa),\kappa+r(\kappa))$, and let $z_2,z_3,z_4$ be
  the corresponding symmetric points in the three remaining quadrants
  in any order. Let $S_1,S_2, S_3, S_4$ be similitudes that are
  defined by $S_i(x) := r(\kappa)x + z_i$, $i=1,2,3,4$.  By reasoning
  as in \cite[pp. 66--67]{Mat95} we have a non-empty compact set $K$
  in $Q$ such that
\begin{equation}\label{invariant}
K=S_1(K)\cup S_2(K)\cup S_3(K)\cup S_4(K).
\end{equation}
The similitudes $S_1, S_2, S_3 ,S_4$ satisfy an open set condition
\cite[p. 67]{Mat95}. Hence, by \cite[Corollary~5.8,
Theorem~4.14]{Mat95},
\[
\dim_\mathcal{M} (K)=\dim_\mathcal{H} (K)=-\frac{\log 4}{\log r(\kappa)}.
\]
If we let $\kappa$ vary between $(0,1/2]$, then $\dim_\mathcal{M} (K)$
reaches all values in $[1,2)$. There exists, in particular,
$\kappa=\kappa(\lambda)\in (0,1/2]$ for which the upper Minkowski
dimension of $K_\lambda:=K$ is $\lambda$.  We define $G$ to be the
open set
\[
G:=B^2(0,2)\setminus K_\lambda.
\]
Since $\partial G=\partial B^2(0,2)\cup K_\lambda$, we obtain
$\dim_\mathcal{M}(\partial G)=\lambda$. 

Estimate \eqref{paljon}
has been verified in \cite[Proposition 5.2]{HH-SV}. 

We shall estimate the John constant of the domain $G$.  We show that
there is a constant $c>0$ such that for every $x\in G$, there is a
rectifiable curve $\gamma:[0,\ell(\gamma)]\to G$, parametrized by its
arc length so that $\gamma(0)=x$, $\gamma(\ell(\gamma))=0$, and 
\begin{equation}\label{john_estimate}
\dist(\gamma(t),\partial G)\ge c\kappa t
\end{equation}
for all $t\in [0,\ell(\gamma)]$. Before the construction of $\gamma$,
let us explain why the property above implies that the John
constant $c_J$ of $G$ is larger than $\beta$ if $\lambda \le 2-\tau
\beta$, where $\tau := 4/(c\log 2)$.  By the mean value theorem 
(recall that $d \log_2(t)/dt = (t\log 2)^{-1}$ for $t>0$)
and the fact that $\kappa\in (0,1/2]$,
\begin{align*}
  \tau \beta \le 2-\lambda & = 2\bigg(1+\frac{1}{\log_2
    [(1-\kappa)/2]}\bigg) =
  2\bigg(\frac{\log_2(1-\kappa)}{\log_2(1-\kappa)-1}\bigg) \\
  & = 2\bigg(\frac{\log_2(1-\kappa)-\log_2(1)}{\log_2(1-\kappa)-\log_2(1)-1}\bigg) < \frac{2\kappa}{(\log 2)(1-\kappa)}\le \frac{4\kappa}{\log 2}.
\end{align*}
It follows that $\beta< c\kappa \le c_J$, where $c_J$ is the John
constant of $G$.

We construct the curve $\gamma$.  If $x$ lies in $G\setminus Q$ then
the construction is clear. Hence, we may assume without loss of
generality that $x$ lies in $G\cap Q=Q\setminus K_\lambda$.  Let $m\ge 0$ the
smallest positive integer for which $x\in Q\setminus K_\lambda^{m+1}$,
where for each $j\geq 1$
\begin{equation}\label{iter}
  K_\lambda^j:=\bigcup_{i_1=1}^4\dotsb \bigcup_{i_j=1}^4 S_{i_1}\circ\dotsb\circ S_{i_j}(Q).
\end{equation}
To see that such an $m$ exists, we use the fact that iterations
$K_\lambda^j$ converge to $K_\lambda$ in the Hausdorff metric $\rho$
as $j\to \infty$, \cite[pp. 66--67]{Mat95}.  Hence, there is $M\in \N$
such that $\rho(K_\lambda^M,K_\lambda)<\dist(x,K_\lambda)/2$.
Especially,
\[
K^M_\lambda \subset \{y\in \R^2\,:\,\dist(y,K_\lambda)\le \dist(x,K_\lambda)/2\},
\]
and it follows that $x\in Q\setminus K_\lambda^M$. Hence, the
smallest $m$ exists.

We write $X^0=\{y\in\R^2\,:\,y_1=0\text{ or }y_2=0\}$ for the
coordinate axes in $\R^2$, and
\[
X^j:=\bigcup_{i_1=1}^4\dotsb \bigcup_{i_j=1}^4 S_{i_1}\circ\dotsb\circ
S_{i_j}(X^0)
\]
for all $j\ge 1$. We connect $x$ to the set $X^{m}\cap Q$ by a curve
$\gamma:[0,t_m]\to G$ such that inequality \eqref{john_estimate} is
valid and
\[
t_m=\dist(x,X^{m})\le \kappa r(\kappa)^{m}< r(\kappa)^{m}.
\]
We proceed inductively: We connect sets $X^{j}$ and $X^{j-1}$ to each
other, when $j\ge 1$; we connect $X^{0}$ to $0$, when
$j=0$. Figure~\ref{fig:Fractal} depicts one of the intermediate
construction steps. Let us first consider the case $1\le j\le m$:
inequality \eqref{john_estimate} is valid for all $t\in [0,t_j]$,
point $\gamma(t_j)$ lies in $X^{j}\cap Q$, and
\begin{equation}\label{induction}
  t_j\le  \sum_{i=j}^\infty r(\kappa)^{i}.
\end{equation}
Let us connect $\gamma(t_j)$ to $X^{j-1}$ as follows: we define
$\gamma$ in $[t_j,t_{j-1}]$ by tracing along set $X^{j}\cap Q$ until
we reach $X^{j}\cap X^{j-1}\cap Q$.  This can be done in a way that
$t_{j-1}-t_j\le r(\kappa)^{j-1}$.  Hence,
\[
t_{j-1}= t_{j-1}-t_j + t_j \le r(\kappa)^{j-1} + \sum_{i=j}^\infty r(\kappa)^{i}\le \sum_{i=j-1}^\infty r(\kappa)^{i}\le 2 r(\kappa)^{j-1},
\]
and inequality \eqref{induction} is true for $j-1$.
Since $K_\lambda\subset K_\lambda^{m+1}$, we have
for all $t\in [t_j,t_{j-1}]$ 
\begin{align*}
  \dist(\gamma(t),\partial G) & =\dist(\gamma(t),K_\lambda) \ge
  \dist(\gamma(t),K_\lambda^{m+1}) \ge \kappa r(\kappa)^{j}
  \\ 
  & \qquad \ge 4^{-1}\kappa r(\kappa)^{j-1} \ge 8^{-1} \kappa t_{j-1}\ge
  8^{-1} \kappa t.
\end{align*}
Hence, inequality~\eqref{john_estimate} is valid for these values of
$t$.

We consider the case $j=0$. Now $\gamma(t_0)$
lies in $X^0\cap Q$, the curve $\gamma$  satisfies
inequality~\eqref{john_estimate} for all $t\in [0,t_0]$, and
$t_0\le \sum_{i=0}^\infty r(\kappa)^i$.
Define $\gamma$ in $[t_0,t_{-1}]$ 
by tracing along $X^0\cap Q$ during
time $t_{-1}-t_0\le 1$.
This yields a curve $\gamma:[0,t_{-1}]\to G$
satisfying
estimate~\eqref{john_estimate} for all $t\in [0,t_1]$.
\end{proof}

\begin{figure}[!htbp] 
\centering
\includegraphics[width=0.45\textwidth]{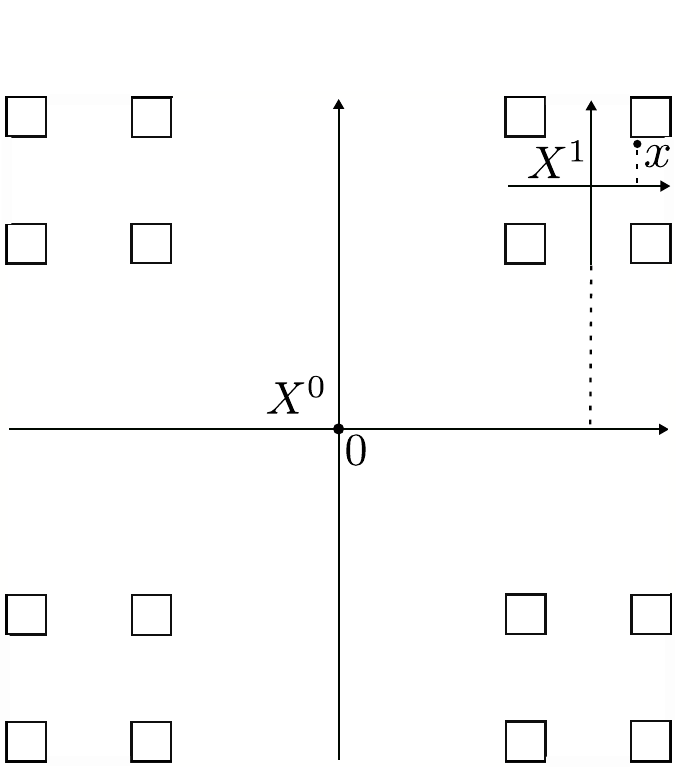}
\caption{$Q\setminus K_\lambda^2$ with coordinate axes $X^0$ and $X^1$
  ($\kappa=1/2$).}
\label{fig:Fractal}
\end{figure}

We are now ready for the proof of Theorem~\ref{sharp_counter_plane}.

\begin{proof}[Proof of Theorem~\ref{sharp_counter_plane}]
  Let $\varkappa\in (0,1)$ be as in Lemma~\ref{mod_lemma}
   and let $\tau>0$ be as in
  Proposition~\ref{olemassa}.  Choose ${\bar c_2}:=\max\{2/\varkappa,
  \tau/\varkappa\}$.  Fix $\lambda\in [1,2)$ and suppose that
  $\beta\in (0,1)$ is such that $\lambda \le 2-{\bar c_2}\, \beta$.
  Then 
  $\lambda\le 2-\tau \beta/\varkappa$, and hence by
  Proposition~\ref{olemassa} there exists a John domain $G$ in $\R^2$
  whose John constant is $c_J\ge \beta/\varkappa$, i.e.  $\beta \le
  \varkappa c_J$. By Theorem~\ref{thm:modification}, $G_\beta$ satisfies
  a $\beta/4$-quasihyperbolic boundary condition.
 Proposition~\ref{dimpres} gives that
$\dim_{\mathcal{M}}(\partial G_\beta)=\dim_{\mathcal{M}}(\partial G)
=\lambda.$
By Theorem~\ref{sharp_counter} the domain $G_\beta$ does not satisfy
the $(q,p)$-Poincar\'e inequality \eqref{poincare}.
\end{proof}

\begin{rem} \label{rmk:sharpness1}
  Theorem~\ref{sharp} states that a domain $G\subset\R^2$ supports the
  $(q,p)$-Poincar\'e inequality \eqref{poincare} with $1\le
  q<p<\infty$ if the following conditions are met: $G$ satisfies a
  $\beta$-quasihyperbolic boundary condition,
  $\mathrm{dim}_{\mathcal{M}}(\partial G)\le \lambda\in [1,2)$, and
  \begin{equation}\label{eq:threshold}
    p>\frac{q(2-\lambda b)}{q+b(2-\lambda)},\qquad b=\frac{2\beta}{1+\beta}.
\end{equation}
Recall a natural restriction among these parameters; by
Theorem~\ref{KR_theorem} there is a constant $c_2>0$ such that
$\mathrm{dim}_{\mathcal{M}}(\partial G)\le 2-c_2\, \beta$.  Taking
this into account, Theorem~\ref{sharp_counter_plane} shows that our
main result is essentially sharp in essentially all the possible cases
when $n=2$. More precisely, if $\lambda \le 2-\bar c_2\,\beta$, then
there is a domain $G_\beta$ which satisfies a
$\beta/4$-quasihyperbolic boundary condition,
$\mathrm{dim}_{\mathcal{M}}(\partial G_\beta)=\lambda$, and $G_\beta$
does not support the $(q,p)$-Poincar\'e inequality~\eqref{poincare} if
the inequality in \eqref{eq:threshold} fails to hold and $1\le
q<p<\infty$.
\end{rem}

\section{Applications to the Neumann problem}
\label{sect:appls}

We discuss an application of our main result to the study of the
solvability 
of the elliptic second order Neumann problem in quasihyperbolic
boundary condition domains. We hence supplement Corollary~4.5 in
\cite{KOT}.

Let $G$ in $\R^n$ be a bounded domain which satisfies a
$\beta$-quasihyperbolic boundary condition. We consider the following
second order strongly elliptic partial differential operator
\begin{equation} \label{operator} \mathcal{L}_Au =
  -\sum_{i,j=1}^n\frac{\partial}{\partial
    x_i}\left(a_{ij}(x)\frac{\partial u}{\partial x_j}\right),
\end{equation}
where $A=\{a_{ij}(x)\}_{i,j=1}^n$, with $a_{ij}$ being real-valued
measurable functions in $G$, $a_{ij}=a_{ji}$, and there exists a
constant $c\geq 1$ such that
\[
c^{-1}|\xi|^2\leq \sum_{i,j=1}^na_{ij}(x)\xi_i\xi_j\leq
c|\xi|^2
\]
for almost every $x\in G$ and for all $\xi\in\R^n$. We deal with the
equation in divergence form
\begin{equation} \label{eq}
\mathcal{L}_Au = f
\end{equation}
subject to the Neumann boundary condition.

Let us fix $1\leq q<\infty$. We say that $u$ is in the domain
$\mathcal{D}(\mathcal{L}_A)$ of $\mathcal{L}_A$ if $u$ satisfies
\eqref{eq} in the weak sense, i.e. $u\in W^{1,2}(G)\cap L^q(G)$,
$f\in L^{q'}(G)$ with $q'=q/(q-1)$, and
\[
\int_G\sum_{i,j=1}^na_{ij}(x)\frac{\partial u}{\partial x_i}\frac{\partial
  \phi}{\partial x_j}\, dx = \int_G \phi(x)f(x)\, dx
\]
for all $\phi\in W^{1,2}(G)\cap L^q(G)$. Then the Neumann problem is
said to be $q$-solvable if for each $f\in L^{q'}(G)$ with 
\[
\int_G f\, dx = 0
\]
there exists $u\in \mathcal{D}(\mathcal{L}_A)$ such that \eqref{eq}
holds in the weak sense.

We have the following corollary of Theorem~\ref{sharp}.

\begin{cor}
  Suppose $G\subset\R^n$, $n\geq 2$, satisfies a
  $\beta$-quasihyperbolic boundary condition for some $\beta \in (0,1]$ such that 
  $(n-2)/(n+2)\leq \beta$. Then the Neumann problem \eqref{eq}
  on $G$ is $q$-solvable for each $1\leq q<2$.
\end{cor}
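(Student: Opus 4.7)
The plan is to derive the $(q,2)$-Poincaré inequality for every $1\leq q<2$ from Theorem~\ref{sharp} and then to invoke the standard Lax-Milgram argument for the Neumann problem.

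First I would verify the Poincaré inequality. By Theorem~\ref{KR_theorem} one may pick $\lambda\in[n-1,n)$ with $\dim_{\mathcal{M}}(\partial G)\leq \lambda$. Condition \eqref{oletus} in Theorem~\ref{sharp} with $p=2$ becomes $q(n-2-\lambda\beta)<2\beta(n-\lambda)$. If $n-2-\lambda\beta\leq 0$ the inequality is automatic; otherwise, since $q<2$, it suffices to check the stronger inequality $2(n-2-\lambda\beta)\leq 2\beta(n-\lambda)$, which simplifies to $\beta n\geq n-2$, i.e. $\beta\geq 1-2/n$, which is precisely our hypothesis. Thus Theorem~\ref{sharp} supplies the $(q,2)$-Poincaré inequality on $G$ for each $1\leq q<2$.

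Second I would carry out the variational argument. Set $V=\{\phi\in W^{1,2}(G):\phi_G=0\}$, a closed subspace of $W^{1,2}(G)$, and define $a(u,v)=\int_G\sum_{i,j}a_{ij}\,\partial_i u\,\partial_j v\,dx$. By ellipticity, $a$ is bounded on $V\times V$ and satisfies $a(u,u)\geq c^{-1}\|\nabla u\|_{L^2(G)}^2$. Given $f\in L^{q'}(G)$ with $\int_G f\,dx=0$, H\"older's inequality combined with the $(q,2)$-Poincar\'e inequality gives, for every $\phi\in V$,
\[
\left|\int_G\phi f\,dx\right|\leq\|\phi\|_{L^q(G)}\|f\|_{L^{q'}(G)}\leq C\|\nabla\phi\|_{L^2(G)}\|f\|_{L^{q'}(G)},
\]
so $F(\phi):=\int_G\phi f\,dx$ is a bounded linear functional on $V$. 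The Lax-Milgram theorem then yields a unique $u\in V$ with $a(u,\phi)=F(\phi)$ for all $\phi\in V$. Because $G$ is bounded and $q<2$, we have $L^2(G)\subset L^q(G)$ and hence $W^{1,2}(G)\cap L^q(G)=W^{1,2}(G)$; any $\phi\in W^{1,2}(G)$ decomposes as $(\phi-\phi_G)+\phi_G$, and the constant part contributes nothing to either side (to $a(u,\cdot)$ since $\nabla$ annihilates constants, to $F$ since $\int_G f=0$). Thus the identity extends to all admissible test functions, so $u\in\mathcal{D}(\mathcal{L}_A)$ and $\mathcal{L}_A u=f$ weakly.

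The main subtlety is functional-analytic and appears in the borderline case $\beta=1-2/n$: strict coercivity of $a$ in the full $W^{1,2}$-norm on $V$ would require the $(2,2)$-Poincar\'e inequality, which is not known in this case (cf.\ Remark~\ref{rem:Conjecture}). I would bypass this by applying Lax-Milgram on the Hilbert space obtained as the completion of $V$ under $\|\nabla\,\cdot\,\|_{L^2(G)}$, on which $a$ is automatically coercive; the $(q,2)$-Poincar\'e inequality provides a continuous embedding of that completion into $L^q(G)$, which is enough to make sense of $F$ and to identify the resulting solution as an element of $\mathcal{D}(\mathcal{L}_A)$.
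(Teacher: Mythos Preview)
Your derivation of the $(q,2)$-Poincar\'e inequality from Theorem~\ref{sharp} is correct and matches the paper's route, except that the paper invokes Corollary~\ref{k} (itself a specialization of Theorem~\ref{sharp}) rather than checking \eqref{oletus} by hand.

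Where you diverge is in the solvability step. The paper does not redo the variational argument; it simply cites \cite[6.10.1/Lemma, p.~381]{M}, which asserts that $q$-solvability of the Neumann problem is \emph{equivalent} to the $(q,2)$-Poincar\'e inequality. Your explicit Lax--Milgram argument is essentially the relevant implication of that lemma, and for $\beta>1-2/n$ it goes through as written: then $n-n\beta<2$, the $(2,2)$-Poincar\'e inequality holds by \cite[Theorem~1.4]{KOT}, and $a$ is coercive on $V$ in the full $W^{1,2}$ norm.

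The borderline case $\beta=1-2/n$ that you flag is genuine, but your workaround leaves a loose thread relative to the paper's stated definition of $\mathcal{D}(\mathcal{L}_A)$. Completing $V$ under $\|\nabla\cdot\|_{L^2}$ and using the $(q,2)$-Poincar\'e embedding gives a Lax--Milgram solution $u$ with $\nabla u\in L^2(G)$ and $u\in L^q(G)$; it does not give $u\in L^2(G)$, hence not $u\in W^{1,2}(G)$ as the definition literally requires. Maz'ya's framework sidesteps this by taking the solution space to be $L^1_2(G)\cap L^q(G)$ (gradient in $L^2$, function in $L^q$) rather than $W^{1,2}(G)\cap L^q(G)$; read that way, your completion argument is exactly what is needed and the proof is complete. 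The paper's citation to Maz'ya absorbs this point rather than addressing it explicitly.
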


\begin{proof}
  By \cite[6.10.1/Lemma, p. 381]{M}, the Neumann problem \eqref{eq} on
  $G$ is $q$-solvable if and only if $G$ supports a $(q,2)$-Poincar\'e
  inequality. Therefore, the claim follows from Corollary~\ref{k}.
\end{proof}

We note that in the plane the Neumann problem \eqref{eq} is
$q$-solvable in every $\beta$-quasihyperbolic boundary condition
domain $G$, $0<\beta\le 1$, whenever $1\leq q<2$.


\begin{thebibliography}{HH2}

\bibitem{B} B.\ Bojarski: Remarks on Sobolev imbedding inequalities,
  Complex Analysis, Joensuu 1987, 52--68, Lecture Notes in Math.,
  1351, Springer, Berlin, 1988.

\bibitem{Br} R.\ C.\ Brown: Sobolev embeddings of weighted Sobolev
  spaces on finite measure and quasibounded domains,
  \emph{J. Inequal. Appl.} \textbf{2} (1998), 325--326.

\bibitem{EH-S} D.\ E.\ Edmunds and R.\ Hurri-Syrj\"anen: Weighted
  Poincar\'e inequalities and Minkowski content. \textit{Proc.\ Roy.\
    Soc.\ Edinburgh Sect. A}~\textbf{125} (1995), no.\ 4, 817--825.

\bibitem{GM} F.\ W.\ Gehring and O.\ Martio: Lipschitz classes and
  quasiconformal mappings, \emph{Ann. Acad. Sci. Fenn. Ser. A I
    Math.} \textbf{10} (1985), 203-219.

\bibitem{HH-S} P.\ Harjulehto and R.\ Hurri-Syrj\"anen: On a
  $(q,p)$-Poincar\'e inequality, \emph{J.\ Math.\ Anal.\ Appl.}
  \textbf{337} (2008), 61--68.

\bibitem{HH-SV} P.\ Harjulehto, R.\ Hurri-Syrj\"anen, and A. V.\
  V\"ah\"akangas, On the $(1,p)$-Poincar\'e inequality, 
  \emph{Illinois\ J.\ Math.} \textbf{56} (2012), 905--930.


\bibitem{H} R.\ Hurri: Poincar\'e domains in $\Rn$, \emph{Ann.\ Acad.\
    Sci.\ Fenn.\ Ser.\ A Math.\ Dissertationes} \textbf{71} (1988),
  1--42.

\bibitem{H-S} R.\ Hurri-Syrj\"anen: An improved Poincar\'e inequality,
  \emph{Proc. Amer. Math. Soc.} \textbf{120} (1994), 213--222.

\bibitem{J} D. Jerison: {The Poincar\'e inequality for vector fields
    satisfying H\"ormander's condition}, \textit{Duke Math. J.}
  \textbf{53} {(1986)}, {503--523}.
  
  \bibitem{JK} R. Jiang and A. Kauranen: {A note on
  ``Quasihyperbolic boundary conditions and Poincar\'e domains''},
  \emph{Math.\ Ann.} \textbf{357} (2013), 1199--1204.

\bibitem{KOT} P.\ Koskela, J.\ Onninen and J.\ T.\ Tyson:
  Quasihyperbolic boundary conditions and Poincar\'e domains,
  \emph{Math.\ Ann.} \textbf{323} (2002) 811--830.

\bibitem{KR} P.\ Koskela and S.\ Rohde: Hausdorff dimension and mean
  porosity, \textit{Math.\ Ann.}~\textbf{309} (1997), no.\ 4,
  593--609.

\bibitem{Mat95} P.\ Mattila: {Geometry of sets and measures in
    Euclidean spaces}, Cambridge Studies in Advanced Mathematics,
  44. Cambridge University Press, Cambridge, 1995.

\bibitem{M} V.\ G.\ Maz'ya: {Sobolev spaces with applications to
    elliptic partial differential equations}, A Series of
  Comprehensive Studies in Mathematics, \textbf{342}, Springer, 2011.

\bibitem{SS} W.\ Smith and D.\ A.\ Stegenga: Exponential integrability
  of the quasi-hyperbolic metric on H\"older domains, \textit{Ann.\
    Acad.\ Sci.\ Fenn.\ Ser.\ A I Math.}~\textbf{16} (1991), no.\ 2,
  345--360.

\bibitem{S} E.\ M.\ Stein: Singular integrals and differentiability
  properties of functions, Princeton University Press, Princeton,
  1970.

\end{thebibliography}
\end{document}